\author{Guillaume Olikier\footnotemark[2] \and Petar Mlinari\'{c}\footnotemark[3] \and P.-A. Absil\footnotemark[4] \and André Uschmajew\footnotemark[5]}
\title{The tangent cone to the real determinantal variety: various expressions and a proof\footnotemark[1]}
\newcommand{\N}{\mathbb{N}}
\newcommand{\R}{\mathbb{R}}
\newcommand{\C}{\mathbb{C}}
\newcommand{\cU}{\mathcal{U}}
\newcommand{\cV}{\mathcal{V}}
\newcommand{\ball}{B}
\DeclareMathOperator{\im}{im}
\newcommand{\ip}[2]{\langle #1 , #2 \rangle}
\newcommand{\norm}[1]{\lVert #1 \rVert}
\DeclareMathOperator*{\argmin}{argmin}
\newcommand{\proj}[2]{P_{#1}(#2)}
\newcommand{\gencone}[4]{{#1}_{#2}^{#4}(#3)} 
\newcommand{\tancone}[2]{\gencone{T}{#1}{#2}{}} 
\newcommand{\norcone}[2]{\gencone{N}{#1}{#2}{}} 
\newcommand{\oshort}[1]{\mkern 0.9mu\overline{\mkern-0.9mu#1\mkern-0.9mu}\mkern 0.9mu}
\newcommand{\ushort}[1]{\mkern 0.9mu\underline{\mkern-0.9mu#1\mkern-0.9mu}\mkern 0.9mu}
\newcommand{\tp}{\top}
\newcommand{\st}{\mathrm{St}}
\newcommand{\grass}{\mathrm{Gr}}
\DeclareMathOperator{\tr}{tr}
\DeclareMathOperator{\rank}{rank}
\newtheorem{theorem}{Theorem}[section]
\newtheorem{proposition}[theorem]{Proposition}
\newtheorem{lemma}[theorem]{Lemma}
\begin{document}
\renewcommand{\thefootnote}{\fnsymbol{footnote}}
\footnotetext[1]{The work of G. O. was supported by ERC grant 786854 G-Statistics from the European Research Council under the European Union's Horizon 2020 research and innovation program and by the French government through the 3IA Côte d'Azur Investments ANR-23-IACL-0001 managed by the National Research Agency. The work of P.-A. A. was supported by the Fonds de la Recherche Scientifique -- FNRS under Grant no T.0001.23. The work of A. U. was supported by the Deutsche Forschungsgemeinschaft (DFG, German Research Foundation) -- Projektnummer 506561557.}
\footnotetext[2]{Institute of Mathematics, \'{E}cole polytechnique fédérale de Lausanne (EPFL), 1015 Lausanne, Switzerland \mbox{(\href{mailto:guillaume.olikier@epfl.ch}{\nolinkurl{guillaume.olikier@epfl.ch}})}.}
\footnotetext[3]{Department of Mathematics, Faculty of Science, University of Zagreb, Bijeni\v{c}ka cesta 30, 10000 Zagreb, Croatia (\href{mailto:petar.mlinaric@math.hr}{\nolinkurl{petar.mlinaric@math.hr}}).}
\footnotetext[4]{ICTEAM Institute, UCLouvain, Avenue Georges Lemaître 4, 1348 Louvain-la-Neuve, Belgium \mbox{(\href{mailto:pa.absil@uclouvain.be}{\nolinkurl{pa.absil@uclouvain.be}})}.}
\footnotetext[5]{Institute of Mathematics \& Centre for Advanced Analytics and Predictive Sciences, University of Augsburg, 86159 Augsburg, Germany \mbox{(\href{mailto:andre.uschmajew@uni-a.de}{\nolinkurl{andre.uschmajew@uni-a.de}})}.}
\renewcommand{\thefootnote}{\arabic{footnote}}

\maketitle

\begin{abstract}
The set of real matrices of upper-bounded rank is a real algebraic variety called the real generic determinantal variety. An explicit description of the tangent cone to that variety is given in Theorem~3.2 of Schneider and Uschmajew [\textit{SIAM J. Optim.}, 25 (2015), pp. 622--646]. The present paper shows that the proof therein is incomplete and provides a proof. It also reviews equivalent descriptions of the tangent cone to that variety. Moreover, it shows that the tangent cone and the algebraic tangent cone to that variety coincide, which is not true for all real algebraic varieties.
\medskip

\noindent
\textbf{Keywords:}
determinantal variety, tangent cone, low-rank matrices.
\medskip

\noindent
\textbf{Mathematics Subject Classification:} 14M12, 49J53.
\end{abstract}

\section{Introduction}
\label{sec:Introduction}
Let $\N$ be the set of nonnegative integers, $F$ a field, and $m, n, r \in \N$ such that $r < \min\{m, n\}$. The set
\begin{equation*}
F_{\le r}^{m \times n} \coloneq \left\{X \in F^{m \times n} \mid \rank X \le r\right\}
\end{equation*}
is the set of all matrices in $F^{m \times n}$ whose order-$(r+1)$ determinants are zero.
Thus, since the determinant is a polynomial function, $F_{\le r}^{m \times n}$ is an affine algebraic subset of $F^{m \times n}$, hence an affine algebraic variety, known as the \emph{generic determinantal variety} over $F$---term found in \cite{ArbarelloCornalbaGriffithsHarris,Harris} and adopted in this paper---or simply the \emph{determinantal variety} over $F$ \cite{LakshmibaiBrown}; see section~\ref{subsec:ElementsAlgebraicAnalyticGeometry} for a brief review of definitions and results from algebraic and analytic geometry.

Many machine learning or signal processing tasks such as dimensionality reduction, collaborative filtering, and signal recovery can be formulated as an optimization problem on $\R_{\le r}^{m \times n}$; see, e.g., references in~\cite[\S 1]{OlikierGallivanAbsil2026}. This has motivated a thorough study of its geometry~\cite{SchneiderUschmajew2015,HosseiniUschmajew2019,HosseiniLukeUschmajew2019,OlikierAbsil2022}.

In his 1908 book \emph{Formulario Mathematico}, Peano introduced an affine tangent cone to a subset of a Euclidean affine space and used it to formulate a necessary condition for optimality. This tangent cone was rediscovered independently by Bouligand and Severi in 1930. Historical investigations can be found in~\cite{DoleckiGreco2007,DoleckiGreco2011}. Since then, with the development of variational analysis, the tangent cone has proven to be a basic tool in constrained optimization to describe admissible search directions and to formulate optimality conditions. It is often called the Bouligand tangent cone, the contingent cone, or simply the tangent cone~\cite{AubinFrankowska,RockafellarWets,Mordukhovich1,Mordukhovich2,Mordukhovich2018}.

The tangent cone also appears in other fields of mathematics. In geometric measure theory, it plays an important role in the study of sets with positive reach~\cite{Federer,Federer1969,RatajZahle}. In algebraic and analytic geometry, the tangent cone to a real or complex algebraic or analytic variety enables to study the local geometry of the variety around its singular points; see references in section~\ref{subsec:ElementsAlgebraicAnalyticGeometry}. Over $\C$, the tangent cone always coincides with the \emph{algebraic} tangent cone. Over~$\R$, the tangent cone is always included in the algebraic tangent cone, and the inclusion can be strict.

Determinantal varieties over algebraically closed fields have been well studied in algebraic geometry; see, e.g.,~\cite[Chapter~II]{ArbarelloCornalbaGriffithsHarris},~\cite[\S 1C]{BrunsVetter},~\cite[Lecture~9]{Harris},~\cite[Chapter~10]{LakshmibaiBrown}, and~\cite{Tsakiris2024} and references therein. In particular, the algebraic tangent cone to the generic determinantal variety over an arbitrary algebraically closed field has been explicitly known since the eighties; see, e.g.,~\cite[(2.2)]{ArbarelloCornalbaGriffithsHarris} and~\cite[Example~20.5]{Harris}. In contrast, the algebraic tangent cone to the generic determinantal variety over $\R$ seems to be unavailable in the literature. Furthermore, for that variety, it is a priori unclear whether the tangent cone can be deduced from the algebraic tangent cone.

An explicit description of the tangent cone to the generic determinantal variety over $\R$ is given in~\cite[Theorem~3.2]{SchneiderUschmajew2015}; it is reviewed in section~\ref{sec:TangentConeRealGenericDeterminantalVariety} together with equivalent expressions. These expressions coincide with that of the algebraic tangent cone to the generic determinantal variety over algebraically closed fields, as detailed in sections~\ref{subsec:AlgebraicTangentConeRealGenericDeterminantalVariety} and~\ref{subsec:ExpressionViaGrassmann}. However, we have detected a gap in~\cite[proof of Theorem~3.2]{SchneiderUschmajew2015}, as detailed in section~\ref{sec:GapProofSU15Thm3.2}, and we are currently unaware of a complete proof in the literature.

A strategy to obtain a complete proof would be to adapt~\cite[proof of Theorem~6.1]{CasonAbsilVanDooren2013}. Indeed,~\cite[Theorem~6.1]{CasonAbsilVanDooren2013} describes explicitly the tangent cone to the intersection of $\R_{\le r}^{m \times n}$ and the unit sphere of $\R^{m \times n}$ endowed with the Frobenius inner product, which is also an affine algebraic subset of $\R^{m \times n}$. Since the intersection of two affine algebraic sets is an affine algebraic set, the tangent cone to the intersection of $\R_{\le r}^{m \times n}$ and the unit sphere admits the representation in~\eqref{eq:TangentVectorsToAlgebraicSetsAsTangentsToAnalyticArcsInTheSet}, on which~\cite[proof of Theorem~6.1]{CasonAbsilVanDooren2013} is based. Discarding the unit sphere constraint in that proof could yield a proof of~\cite[Theorem~3.2]{SchneiderUschmajew2015}.

In this paper, we bring four contributions to the study of the geometry of the generic determinantal variety over $\R$. First, in section~\ref{sec:GapProofSU15Thm3.2}, we describe the aforementioned gap in \cite[proof of Theorem~3.2]{SchneiderUschmajew2015}. Second, in section~\ref{sec:Proof}, we prove \cite[Theorem~3.2]{SchneiderUschmajew2015}. In contrast with the above-mentioned strategy based on~\cite[proof of Theorem~6.1]{CasonAbsilVanDooren2013}, our proof, rooted in the standard definition of the tangent cone in variational analysis, is shorter, explicit, and does not rely on algebraic geometry concepts. Third, in section~\ref{subsec:AlgebraicTangentConeRealGenericDeterminantalVariety}, we deduce that the algebraic tangent cone and the tangent cone to $\R_{\le r}^{m \times n}$ coincide. Fourth, in section~\ref{sec:DiscussionsComplements}, we provide an alternative proof of the part of \cite[Theorem~3.2]{SchneiderUschmajew2015} for which \cite[proof of Theorem~3.2]{SchneiderUschmajew2015} contains a gap, in two versions, geometric and algebraic, which follows more closely the strategy of \cite[proof of Theorem~3.2]{SchneiderUschmajew2015} than the proof from section~\ref{sec:Proof}. In view of its geometric formulation, this proof is less matrix-specific than the one from section~\ref{sec:Proof} and it may lend itself more nicely to extensions to other algebraic varieties.

The rest of the paper is organized as follows. Section~\ref{sec:ProjectionTangentCone} reviews the concepts of projection and tangent cone in a Euclidean vector space. Section~\ref{sec:MatrixNotation} introduces the matrix notation that is used throughout the paper. Section~\ref{sec:TangentConeRealGenericDeterminantalVariety} reviews several known descriptions of the tangent cone to $\R_{\le r}^{m \times n}$. A complete proof of these descriptions is proposed in section~\ref{sec:Proof}. The gap in~\cite[proof of Theorem~3.2]{SchneiderUschmajew2015} is described in section~\ref{sec:GapProofSU15Thm3.2}. Section~\ref{sec:AlgebraicTangentConeGenericDeterminantalVariety} focuses on the algebraic tangent cone to the generic determinantal variety. Conclusions are drawn in section~\ref{sec:Conclusion}. Section~\ref{sec:DiscussionsComplements} proposes discussions and complements.

\section{Projection and tangent cone in a Euclidean vector space}
\label{sec:ProjectionTangentCone}
Let $\mathcal{E}$ be a Euclidean vector space and $S$ a nonempty subset of $\mathcal{E}$.
The \emph{projection} of $x \in \mathcal{E}$ onto $S$ is $\proj{S}{x} \coloneq \argmin_{y \in S} \norm{x-y}$, which is a nonempty compact set if $S$ is closed~\cite[Example~1.20]{RockafellarWets}. When $\proj{S}{x}$ is a singleton, it is identified with its unique element. If $S$ is a linear subspace, then $P_S$ is a linear map from $\mathcal{E}$ to $S$.
The set $S$ is called a \emph{cone} if $x \in S$ implies $\alpha x \in S$ for all $\alpha \in [0, \infty)$~\cite[\S 3B]{RockafellarWets}.
A vector $v \in \mathcal{E}$ is said to be \emph{tangent} to $S$ at $x \in S$ if there exist sequences $(x_i)_{i \in \N}$ in $S$ converging to $x$ and $(t_i)_{i \in \N}$ in $(0, \infty)$ such that the sequence $(\frac{x_i-x}{t_i})_{i \in \N}$ converges to~$v$~\cite[Definition~6.1]{RockafellarWets}. The set of all tangent vectors to $S$ at $x \in S$ is a closed cone~\cite[Proposition~6.2]{RockafellarWets} called the \emph{tangent cone} to $S$ at $x$ and denoted by $\tancone{S}{x}$.
If $S$ is a submanifold of $\mathcal{E}$ locally around $x \in S$, then $\tancone{S}{x}$ coincides with the tangent space to $S$ at $x$~\cite[Example~6.8]{RockafellarWets}, whose orthogonal complement is the normal space to $S$ at $x$ and denoted by $\norcone{S}{x}$.
The following basic fact is used in section~\ref{sec:Proof}.

\begin{proposition}
\label{prop:TanConeToClosedConeAtTheOrigin}
If $S \subseteq \mathcal{E}$ is a closed cone, then $\tancone{S}{0} = S$.
\end{proposition}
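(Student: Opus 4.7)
The plan is to prove the equality by showing both inclusions, each of which follows directly from the definition of the tangent cone in the preceding paragraph.

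For the inclusion $\tancone{S}{0} \subseteq S$, I would start from an arbitrary $v \in \tancone{S}{0}$ and unpack the definition: there are sequences $(x_i)_{i \in \N}$ in $S$ with $x_i \to 0$ and $(t_i)_{i \in \N}$ in $(0, \infty)$ such that $x_i / t_i \to v$. Since $S$ is a cone and $t_i > 0$, each rescaled vector $x_i / t_i$ lies in $S$. Closedness of $S$ then forces the limit $v$ to lie in $S$. This direction uses both hypotheses (cone and closed) and is essentially a one-line argument.

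For the reverse inclusion $S \subseteq \tancone{S}{0}$, I would pick any $v \in S$ and exhibit explicit sequences realizing $v$ as a tangent vector at $0$. A natural choice is $t_i \coloneq 1/i$ and $x_i \coloneq v/i$; then $x_i \in S$ because $S$ is a cone, $x_i \to 0$, and $(x_i - 0)/t_i = v$ for every $i$, so the quotient sequence trivially converges to $v$. Note that $0 \in S$ automatically holds since $S$ is a nonempty cone (taking $\alpha = 0$ in the definition), so the basepoint in the tangent cone definition is legitimate.

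There is no real obstacle here; the only subtlety worth flagging is that the definition of cone in section~\ref{sec:ProjectionTangentCone} allows $\alpha = 0$, which ensures $0 \in S$ and makes $\tancone{S}{0}$ well defined, and that the inclusion $\tancone{S}{0} \subseteq S$ genuinely requires closedness of $S$ (otherwise $v$ would only be known to lie in $\oshort{S}$).
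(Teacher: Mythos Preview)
Your proof is correct and follows essentially the same approach as the paper: both inclusions are obtained directly from the definition, using that $S$ is a cone to rescale and closedness of $S$ to pass to the limit. The only cosmetic differences are the order in which you treat the two inclusions and your explicit choice $t_i = 1/i$ (for $i \ge 1$) where the paper takes an arbitrary null sequence in $(0,\infty)$.
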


\begin{proof}
Let us prove the inclusion $S \subseteq \tancone{S}{0}$. Let $v \in S$ and $(t_i)_{i \in \N}$ be a sequence in $(0, \infty)$ converging to $0$. Define $x_i \coloneq t_i v$ for all $i \in \N$. Then, $(x_i)_{i \in \N}$ is in $S$ and converges to $0$. Moreover, for all $i \in \N$, $\frac{x_i}{t_i} = v$. Thus, $(\frac{x_i}{t_i})_{i \in \N}$ converges to $v$ and hence $v \in \tancone{S}{0}$.

Let us prove the converse inclusion. Let $v \in \tancone{S}{0}$. By definition, there exist sequences $(x_i)_{i \in \N}$ in $S$ converging to $0$ and $(t_i)_{i \in \N}$ in $(0, \infty)$ such that $(\frac{x_i}{t_i})_{i \in \N}$ converges to $v$. Since $S$ is a cone, $(\frac{x_i}{t_i})_{i \in \N}$ is in $S$. Since $S$ is closed, the limit $v$ of $(\frac{x_i}{t_i})_{i \in \N}$ is in $S$.
\end{proof}

\section{Matrix notation}
\label{sec:MatrixNotation}
Let $F$ be a field. The \emph{image} (also called the \emph{range} or \emph{column space}) of $X \in F^{m \times n}$ is $\im X \coloneq X F^n$, which is a linear subspace of $F^m$ whose dimension is called the \emph{rank} of $X$ and denoted by $\rank X$. For every $\ushort{r} \in \{0, \dots, \min\{m, n\}\}$,
\begin{equation*}
F_{\ushort{r}}^{m \times n} \coloneq \left\{X \in F^{m \times n} \mid \rank X = \ushort{r}\right\}.
\end{equation*}
For every $p, q \in \N$, $0_{p \times q}$ is the zero matrix in $F^{p \times q}$ and $I_p$ is the identity matrix in $F^{p \times p}$. 

Let $F$ be $\R$ or $\C$. The \emph{conjugate transpose} of $X \in F^{m \times n}$ is $X^* \coloneq \oshort{X}^\tp$ \cite[\S 0.2.5]{HornJohnson}. For every $p, q \in \N$ such that $q \ge p$,
\begin{equation*}
\st_F(p, q) \coloneq \left\{U \in F^{q \times p} \mid U^* U = I_p\right\}
\end{equation*}
is a \emph{Stiefel manifold}; see, e.g.,~\cite[\S 3.3.2]{AbsilMahonySepulchre} for $F = \R$. The tensor product of two linear subspaces $\mathcal{U} \subseteq F^m$ and $\mathcal{V} \subseteq F^n$ can be regarded as a linear subspace of $F^{m \times n}$ as follows:
\begin{align}
\label{eq:TensorProductMatrixFactorization}
\mathcal{U} \otimes \mathcal{V} = \left\{Z \in F^{m \times n} \mid \im Z \subseteq \mathcal{U} \text{ and } \im Z^* \subseteq \mathcal{V}\right\};
\end{align}
see, e.g.,~\cite[\S 1.1.1]{Hackbusch} for $F = \R$. In what follows, $F^{m \times n}$ is endowed with the Frobenius inner product $\ip{X}{Y} \coloneq \tr Y^* X$~\cite[(5.2.7)]{HornJohnson}.

\section{The tangent cone to the real generic determinantal variety}
\label{sec:TangentConeRealGenericDeterminantalVariety}
Several expressions for the tangent cone to $\R_{\le r}^{m \times n}$ are reviewed in Theorem~\ref{thm:TanConeRealGenericDeterminantalVariety}, based on the stratification of $\R_{\le r}^{m \times n}$ by the rank, and in Theorem~\ref{thm:TanConeRealGenericDeterminantalVarietyMatrix}, through matrix equations.

The tangent cone to $\R_{\le r}^{m \times n}$ is closely related to the stratification of $\R_{\le r}^{m \times n}$ that is induced by the subadditive and lower semicontinuous function $\rank : \R^{m \times n} \to \{0, \dots, \min\{m, n\}\}$; see \cite{Hiriart-UrrutyLe} for more variational properties of the rank function. Specifically, $\R_{\le r}^{m \times n}$ can be partitioned as
\begin{equation*}
\R_{\le r}^{m \times n} = \bigcup_{\ushort{r}=0}^r \R_{\ushort{r}}^{m \times n},
\end{equation*}
and, for every $\ushort{r} \in \{0, \dots, \min\{m, n\}\}$, $\R_{\ushort{r}}^{m \times n}$ is an embedded submanifold of $\R^{m \times n}$ of dimension $(m+n-\ushort{r})\ushort{r}$~\cite[Example~8.14]{Lee2003}, whose closure is $\R_{\le \ushort{r}}^{m \times n}$ (see, e.g.,~\cite[Proposition~2.1]{OlikierAbsil2022}), and that is connected except if $m = n = \ushort{r}$ (see~\cite[Chapter~5, Proposition~1.14]{HelmkeMoore} or~\cite[Proposition~4.1]{HelmkeShayman1995}). The closure property implies that the stratification satisfies the condition of the frontier~\cite{Mather}.

The main purpose of this paper is to provide a short yet detailed and complete proof of Theorem~\ref{thm:TanConeRealGenericDeterminantalVariety}, which describes the tangent cone to the generic determinantal variety over $\R$. Such a proof can be found in section~\ref{sec:Proof}. The description is based on the tangent and normal spaces to $\R_{\ushort{r}}^{m \times n}$ and the projection onto the normal space, whose expressions are reviewed below.

\begin{theorem}
\label{thm:TanConeRealGenericDeterminantalVariety}
Let $X \in \R_{\le r}^{m \times n}$ and $\ushort{r} \coloneq \rank X$. Then,
\begin{align}
\tancone{\R_{\le r}^{m \times n}}{X}
\label{eq:TanConeRealGenericDeterminantalVarietyOrthogonal}
&= \tancone{\R_{\ushort{r}}^{m \times n}}{X} + \left(\norcone{\R_{\ushort{r}}^{m \times n}}{X} \cap \R_{\le r-\ushort{r}}^{m \times n}\right)\\
\label{eq:TanConeRealGenericDeterminantalVariety}
&= \tancone{\R_{\ushort{r}}^{m \times n}}{X} + \R_{\le r-\ushort{r}}^{m \times n}\\
\label{eq:TanConeRealGenericDeterminantalVarietyNormal}
&= \left\{Z \in \R^{m \times n} \mid \rank \proj{\norcone{\R_{\ushort{r}}^{m \times n}}{X}}{Z} \le r-\ushort{r}\right\}.
\end{align}
\end{theorem}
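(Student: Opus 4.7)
The plan is to prove the theorem in two stages: first establish that the three right-hand sides \eqref{eq:TanConeRealGenericDeterminantalVarietyOrthogonal}, \eqref{eq:TanConeRealGenericDeterminantalVariety}, and \eqref{eq:TanConeRealGenericDeterminantalVarietyNormal} coincide as subsets of $\R^{m \times n}$, then show by two opposite inclusions that this common set equals $\tancone{\R_{\le r}^{m\times n}}{X}$. Throughout, set $\cU \coloneq \im X$ and $\cV \coloneq \im X^\tp$, and use the known descriptions $\tancone{\R_{\ushort{r}}^{m\times n}}{X} = \cU \otimes \R^n + \R^m \otimes \cV$ and $\norcone{\R_{\ushort{r}}^{m\times n}}{X} = \cU^\perp \otimes \cV^\perp$ of the tangent and normal spaces to the smooth stratum through $X$.

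The algebraic equivalences follow quickly. The inclusion \eqref{eq:TanConeRealGenericDeterminantalVarietyOrthogonal} $\subseteq$ \eqref{eq:TanConeRealGenericDeterminantalVariety} is trivial. For \eqref{eq:TanConeRealGenericDeterminantalVariety} $\subseteq$ \eqref{eq:TanConeRealGenericDeterminantalVarietyNormal}, decompose $Z = T + M$ with $T$ tangent and $\rank M \le r - \ushort{r}$; orthogonality gives $\proj{\norcone{\R_{\ushort{r}}^{m\times n}}{X}}{Z} = \proj{\norcone{\R_{\ushort{r}}^{m\times n}}{X}}{M}$, and since the projector acts as $Y \mapsto U_\perp U_\perp^\tp Y V_\perp V_\perp^\tp$ it cannot increase rank, so the projection has rank at most $\rank M \le r - \ushort{r}$. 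For \eqref{eq:TanConeRealGenericDeterminantalVarietyNormal} $\subseteq$ \eqref{eq:TanConeRealGenericDeterminantalVarietyOrthogonal}, write $Z = Z_T + Z_N$ using the orthogonal splitting $\R^{m\times n} = \tancone{\R_{\ushort{r}}^{m\times n}}{X} \oplus \norcone{\R_{\ushort{r}}^{m\times n}}{X}$ and observe that $Z_N = \proj{\norcone{\R_{\ushort{r}}^{m\times n}}{X}}{Z}$ belongs by hypothesis to the intersection on the right of \eqref{eq:TanConeRealGenericDeterminantalVarietyOrthogonal}.

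For the inclusion $\supseteq$, it suffices to show that every element $T + M$ of \eqref{eq:TanConeRealGenericDeterminantalVariety} belongs to the tangent cone. Because $\R_{\ushort{r}}^{m\times n}$ is a smooth embedded submanifold of $\R^{m\times n}$ at $X$, pick a smooth curve $\gamma \colon (-\varepsilon, \varepsilon) \to \R_{\ushort{r}}^{m\times n}$ with $\gamma(0) = X$ and $\gamma'(0) = T$. For any sequence $t_i \to 0^+$, set $X_i \coloneq \gamma(t_i) + t_i M$; by subadditivity of the rank, $\rank X_i \le \ushort{r} + (r - \ushort{r}) = r$, so $X_i \in \R_{\le r}^{m\times n}$, and clearly $X_i \to X$ with $(X_i - X)/t_i \to T + M$, placing $T + M$ in $\tancone{\R_{\le r}^{m\times n}}{X}$.

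The main obstacle is the reverse inclusion $\tancone{\R_{\le r}^{m\times n}}{X} \subseteq$ \eqref{eq:TanConeRealGenericDeterminantalVarietyNormal}. Given $Z$ in the tangent cone, choose sequences $X_i \to X$ in $\R_{\le r}^{m\times n}$ and $t_i \to 0^+$ with $Z_i \coloneq (X_i - X)/t_i \to Z$. Working in orthonormal bases adapted to $\R^m = \cU \oplus \cU^\perp$ and $\R^n = \cV \oplus \cV^\perp$, in these coordinates $X$ has block form $\mathrm{diag}(\Sigma, 0)$ with $\Sigma$ invertible of size $\ushort{r}$, and the blocks of $X_i$ satisfy $X_{11}^{(i)} \to \Sigma$, while $X_{12}^{(i)}, X_{21}^{(i)}, X_{22}^{(i)}$ are each $O(t_i)$, being $t_i$ times the corresponding (bounded) blocks of $Z_i$. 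For $i$ large, $X_{11}^{(i)}$ is invertible, and the Schur complement factorization gives $\rank X_i = \rank X_{11}^{(i)} + \rank\bigl(X_{22}^{(i)} - X_{21}^{(i)}(X_{11}^{(i)})^{-1}X_{12}^{(i)}\bigr)$, whence the Schur complement has rank at most $r - \ushort{r}$. Dividing by $t_i$, the leading term $X_{22}^{(i)}/t_i$ converges to the $\cU^\perp \otimes \cV^\perp$ block of $Z$, i.e.\ the adapted-coordinate representation of $\proj{\norcone{\R_{\ushort{r}}^{m\times n}}{X}}{Z}$, whereas the correction $(X_{21}^{(i)}/t_i)(X_{11}^{(i)})^{-1}X_{12}^{(i)}$ is bounded times $O(t_i)$ and vanishes. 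Since $\R_{\le r - \ushort{r}}^{m\times n}$ is closed, the rank bound persists in the limit, yielding $\rank \proj{\norcone{\R_{\ushort{r}}^{m\times n}}{X}}{Z} \le r - \ushort{r}$. The subtle point here is exactly this bookkeeping of vanishing orders: one must verify that both off-diagonal blocks carry a factor $t_i$, so that the Schur correction really dies after scaling by $1/t_i$ and the surviving term is identifiable with the orthogonal projection onto the normal space.
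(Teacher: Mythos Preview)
Your proof is correct. For the hard inclusion $\tancone{\R_{\le r}^{m\times n}}{X} \subseteq$ \eqref{eq:TanConeRealGenericDeterminantalVarietyNormal}, your Schur-complement argument coincides with the alternative proof the paper records in its appendix (section~\ref{subsec:AlgebraicProof}): block-decompose $X_i$ in bases adapted to $\im X$ and $\im X^\tp$, invert the $(1,1)$-block once it is close enough to $\Sigma$, bound the rank of the Schur complement by $r-\ushort{r}$ via the factorization of $X_i$, and pass to the limit after dividing by $t_i$. The paper's \emph{primary} proof of this inclusion (section~\ref{subsubsec:MainInequality}) is different: for each $X_i$ it chooses $\oshort{U}_{i\perp} \in \st_\R(m-r,m)$ with $\im \oshort{U}_{i\perp} \subseteq (\im X_i)^\perp$, extracts a convergent subsequence by compactness of the Stiefel manifold, and reads the rank bound directly from the limiting relations $\oshort{U}_\perp^\tp X = 0$ and $\oshort{U}_\perp^\tp Z V_\perp = 0$. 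That route avoids any block inversion and the order-tracking you need to make the Schur correction vanish, at the price of a non-constructive compactness step; your approach is more explicit and is exactly what the paper later identifies with the orthographic-retraction viewpoint (sections~\ref{subsec:GeometricProof} and~\ref{subsec:LinkAlgebraGeometry}). The remaining pieces---the cyclic equivalence of the three right-hand sides and the curve-plus-low-rank construction for the easy inclusion---match the paper's treatment.
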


Equations~\eqref{eq:TanConeRealGenericDeterminantalVarietyOrthogonal} and~\eqref{eq:TanConeRealGenericDeterminantalVariety} are respectively~\cite[(2.5) and (2.6)]{HosseiniLukeUschmajew2019}. In view of the first equality in~\eqref{eq:NorSpaceFixedRankManifoldTensorProduct} below,~\eqref{eq:TanConeRealGenericDeterminantalVarietyOrthogonal} is equivalent to the statement of~\cite[Theorem~3.2]{SchneiderUschmajew2015}; the proof therein is discussed in section~\ref{sec:GapProofSU15Thm3.2} and completed in section~\ref{sec:Proof}.

In the rest of this section, given $\ushort{r} \in \{0, \dots, \min\{m, n\}\}$ and $X \in \R_{\ushort{r}}^{m \times n}$, the tangent and normal spaces to $\R_{\ushort{r}}^{m \times n}$ at $X$ are expressed as functions of $\im X$, $\im X^\tp$, and their orthogonal complements by relying on orthonormal bases or tensor products of these subspaces. These expressions enable to reformulate Theorem~\ref{thm:TanConeRealGenericDeterminantalVariety} via matrix equations in Theorem~\ref{thm:TanConeRealGenericDeterminantalVarietyMatrix}.

Let $U \in \st_\R(\ushort{r}, m)$, $U_\perp \in \st_\R(m-\ushort{r}, m)$, $V \in \st_\R(\ushort{r}, n)$, and $V_\perp \in \st_\R(n-\ushort{r}, n)$ such that $\im U = \im X$, $\im U_\perp = (\im X)^\perp$, $\im V = \im X^\tp$, and $\im V_\perp = (\im X^\tp)^\perp$.
Then,
\begin{align}
\norcone{\R_{\ushort{r}}^{m \times n}}{X}
\label{eq:NorSpaceFixedRankManifoldMatrix}
&= [U \; U_\perp] \begin{bmatrix} 0_{\ushort{r} \times \ushort{r}} & 0_{\ushort{r} \times n-\ushort{r}} \\ 0_{m-\ushort{r} \times \ushort{r}} & \R^{m-\ushort{r} \times n-\ushort{r}} \end{bmatrix} [V \; V_\perp]^\tp = U_\perp \R^{m-\ushort{r} \times n-\ushort{r}} V_\perp^\tp,\\
\tancone{\R_{\ushort{r}}^{m \times n}}{X}
\label{eq:TanSpaceFixedRankManifoldMatrix}
&= [U \; U_\perp] \begin{bmatrix} \R^{\ushort{r} \times \ushort{r}} & \R^{\ushort{r} \times n-\ushort{r}} \\ \R^{m-\ushort{r} \times \ushort{r}} & 0_{m-\ushort{r} \times n-\ushort{r}} \end{bmatrix} [V \; V_\perp]^\tp.
\end{align}
Equation~\eqref{eq:TanSpaceFixedRankManifoldMatrix} is given in~\cite[Proposition~2.1]{Vandereycken2013}. Equation~\eqref{eq:NorSpaceFixedRankManifoldMatrix} follows from~\eqref{eq:TanSpaceFixedRankManifoldMatrix}. By~\eqref{eq:TensorProductMatrixFactorization}, we can rewrite~\eqref{eq:NorSpaceFixedRankManifoldMatrix} and~\eqref{eq:TanSpaceFixedRankManifoldMatrix} as
\begin{align}
\norcone{\R_{\ushort{r}}^{m \times n}}{X}
\label{eq:NorSpaceFixedRankManifoldTensorProduct}
&= (\im X)^\perp \otimes (\im X^\tp)^\perp
= \ker X^\tp \otimes \ker X,\\
\tancone{\R_{\ushort{r}}^{m \times n}}{X}
\label{eq:TanSpaceFixedRankManifoldTensorProduct}
&= \left((\im X) \otimes (\im X^\tp)\right) + \left((\im X)^\perp \otimes (\im X^\tp)\right) + \left((\im X) \otimes (\im X^\tp)^\perp\right).
\end{align}
The three terms in the right-hand side of~\eqref{eq:TanSpaceFixedRankManifoldTensorProduct} are mutually orthogonal linear subspaces of $\R^{m \times n}$. Furthermore, by~\eqref{eq:NorSpaceFixedRankManifoldMatrix} and~\eqref{eq:TanSpaceFixedRankManifoldMatrix}, for all $Z \in \R^{m \times n}$,
\begin{align}
\proj{\norcone{\R_{\ushort{r}}^{m \times n}}{X}}{Z}
\label{eq:ProjNorSpaceFixedRankManifold}
&= [U \; U_\perp] \begin{bmatrix} 0_{\ushort{r} \times \ushort{r}} & 0_{\ushort{r} \times n-\ushort{r}} \\[1mm] 0_{m-\ushort{r} \times \ushort{r}} & U_\perp^\tp Z V_\perp \end{bmatrix} [V \; V_\perp]^\tp\\
\nonumber
&= U_\perp U_\perp^\tp Z V_\perp V_\perp^\tp,\\
\proj{\tancone{\R_{\ushort{r}}^{m \times n}}{X}}{Z}
\label{eq:ProjTanSpaceFixedRankManifold}
&= [U \; U_\perp] \begin{bmatrix} U^\tp Z V & U^\tp Z V_\perp \\ U_\perp^\tp Z V & 0_{m-\ushort{r} \times n-\ushort{r}} \end{bmatrix} [V \; V_\perp]^\tp\\
\nonumber
&= UU^\tp Z + Z VV^\tp - UU^\tp Z VV^\tp.
\end{align}
Equation~\eqref{eq:ProjTanSpaceFixedRankManifold} is used in section~\ref{subsec:LinkAlgebraGeometry}.

\begin{theorem}
\label{thm:TanConeRealGenericDeterminantalVarietyMatrix}
Let $X \in \R_{\le r}^{m \times n}$ and $\ushort{r} \coloneq \rank X$. Let $U \in \st_\R(\ushort{r}, m)$, $U_\perp \in \st_\R(m-\ushort{r}, m)$, $V \in \st_\R(\ushort{r}, n)$, and $V_\perp \in \st_\R(n-\ushort{r}, n)$ such that $\im U = \im X$, $\im U_\perp = (\im X)^\perp$, $\im V = \im X^\tp$, and $\im V_\perp = (\im X^\tp)^\perp$. Then,
\begin{align*}
\tancone{\R_{\le r}^{m \times n}}{X}
&= [U \; U_\perp] \begin{bmatrix} \R^{\ushort{r} \times \ushort{r}} & \R^{\ushort{r} \times n-\ushort{r}} \\ \R^{m-\ushort{r} \times \ushort{r}} & \R_{\le r-\ushort{r}}^{m-\ushort{r} \times n-\ushort{r}} \end{bmatrix} [V \; V_\perp]^\tp\\
&= \left\{Z \in \R^{m \times n} \mid \rank U_\perp^\tp Z V_\perp \le r-\ushort{r}\right\}.
\end{align*}
\end{theorem}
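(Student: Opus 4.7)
The plan is to derive both matrix expressions directly from Theorem~\ref{thm:TanConeRealGenericDeterminantalVariety} together with the block/tensor-product formulas~\eqref{eq:NorSpaceFixedRankManifoldMatrix}, \eqref{eq:TanSpaceFixedRankManifoldMatrix}, and \eqref{eq:ProjNorSpaceFixedRankManifold} already established in this section. No new variational argument is needed at this stage: the genuinely nontrivial content -- namely, that every matrix of the claimed block form is actually a tangent vector to $\R_{\le r}^{m \times n}$ at $X$ -- is absorbed into Theorem~\ref{thm:TanConeRealGenericDeterminantalVariety}.

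For the first equality, I would start from~\eqref{eq:TanConeRealGenericDeterminantalVarietyOrthogonal}, which expresses $\tancone{\R_{\le r}^{m \times n}}{X}$ as the sum of $\tancone{\R_{\ushort{r}}^{m \times n}}{X}$ and $\norcone{\R_{\ushort{r}}^{m \times n}}{X} \cap \R_{\le r-\ushort{r}}^{m \times n}$. The first summand is given in block form by~\eqref{eq:TanSpaceFixedRankManifoldMatrix}. For the second summand, the key observation is that, by~\eqref{eq:NorSpaceFixedRankManifoldMatrix}, every normal vector factors as $N = U_\perp D V_\perp^\tp$ with $D \in \R^{m-\ushort{r} \times n-\ushort{r}}$, and since $U_\perp$ and $V_\perp$ have orthonormal columns, multiplication on the left by $U_\perp$ and on the right by $V_\perp^\tp$ preserves rank, so $\rank N = \rank D$. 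This identifies $\norcone{\R_{\ushort{r}}^{m \times n}}{X} \cap \R_{\le r-\ushort{r}}^{m \times n}$ with $U_\perp \R_{\le r-\ushort{r}}^{m-\ushort{r} \times n-\ushort{r}} V_\perp^\tp$. Since the block patterns of the tangent and normal spaces occupy complementary positions relative to the decomposition induced by $[U \; U_\perp]$ and $[V \; V_\perp]$, adding them assembles the first claimed expression.

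For the second equality, the cleanest route is to invoke~\eqref{eq:TanConeRealGenericDeterminantalVarietyNormal} together with the factorization $\proj{\norcone{\R_{\ushort{r}}^{m \times n}}{X}}{Z} = U_\perp (U_\perp^\tp Z V_\perp) V_\perp^\tp$ from~\eqref{eq:ProjNorSpaceFixedRankManifold}; the same rank-preservation argument then gives $\rank \proj{\norcone{\R_{\ushort{r}}^{m \times n}}{X}}{Z} = \rank U_\perp^\tp Z V_\perp$, and the second equality follows. Alternatively, the two matrix expressions can be reconciled directly: using $[U \; U_\perp][U \; U_\perp]^\tp = I_m$ and the analogous identity for $V$, an arbitrary $Z \in \R^{m \times n}$ decomposes into its four blocks $U^\tp Z V$, $U^\tp Z V_\perp$, $U_\perp^\tp Z V$, $U_\perp^\tp Z V_\perp$, and $Z$ lies in the first expression if and only if the bottom-right block $U_\perp^\tp Z V_\perp$ has rank at most $r-\ushort{r}$. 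No substantial obstacle is anticipated; the entire argument reduces to block-matrix bookkeeping built on the single fact that multiplication by a Stiefel matrix preserves rank.
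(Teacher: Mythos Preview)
Your proposal is correct and follows essentially the same approach as the paper: both derive the first equality from~\eqref{eq:TanConeRealGenericDeterminantalVarietyOrthogonal} via the block formulas~\eqref{eq:NorSpaceFixedRankManifoldMatrix} and~\eqref{eq:TanSpaceFixedRankManifoldMatrix} together with the identification $\norcone{\R_{\ushort{r}}^{m \times n}}{X} \cap \R_{\le r-\ushort{r}}^{m \times n} = U_\perp \R_{\le r-\ushort{r}}^{m-\ushort{r} \times n-\ushort{r}} V_\perp^\tp$, and both derive the second equality from~\eqref{eq:TanConeRealGenericDeterminantalVarietyNormal} via~\eqref{eq:ProjNorSpaceFixedRankManifold} and the rank-preservation property of multiplication by Stiefel matrices.
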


\begin{proof}
By~\eqref{eq:NorSpaceFixedRankManifoldMatrix} and~\eqref{eq:TanSpaceFixedRankManifoldMatrix}, the first equality is equivalent to~\eqref{eq:TanConeRealGenericDeterminantalVarietyOrthogonal} since
\begin{equation*}
U_\perp \R^{m-\ushort{r} \times n-\ushort{r}} V_\perp^\tp \cap \R_{\le r-\ushort{r}}^{m \times n} = U_\perp \R_{\le r-\ushort{r}}^{m-\ushort{r} \times n-\ushort{r}} V_\perp^\tp.
\end{equation*}
By~\eqref{eq:ProjNorSpaceFixedRankManifold}, the second equality is equivalent to~\eqref{eq:TanConeRealGenericDeterminantalVarietyNormal} since, for all $Z \in \R^{m \times n}$,
\begin{equation*}
\rank U_\perp U_\perp^\tp Z V_\perp V_\perp^\tp = \rank U_\perp^\tp Z V_\perp;
\end{equation*}
the inequality $\le$ is clear, and the inequality $\ge$ holds as $U_\perp^\tp Z V_\perp = U_\perp^\tp (U_\perp U_\perp^\tp Z V_\perp V_\perp^\tp) V_\perp$.
\end{proof}

\section{Proof of Theorem~\ref{thm:TanConeRealGenericDeterminantalVariety}}
\label{sec:Proof}
The novel part consists of section~\ref{subsec:ProofFirstInclusion}. The rest of the proof is straightforward or known, and is given for completeness.
If $X = 0_{m \times n}$, then Theorem~\ref{thm:TanConeRealGenericDeterminantalVariety} follows from Proposition~\ref{prop:TanConeToClosedConeAtTheOrigin}  since $\R_{\le r}^{m \times n}$ is a closed cone. Assume for the rest of the section that $X \ne 0_{m \times n}$.
The equality
\begin{equation}
\label{eq:TanConeRealGenericDeterminantalVarietyOrthogonalNormal}
\tancone{\R_{\ushort{r}}^{m \times n}}{X} + \left(\norcone{\R_{\ushort{r}}^{m \times n}}{X} \cap \R_{\le r-\ushort{r}}^{m \times n}\right) = \left\{Z \in \R^{m \times n} \mid \rank \proj{\norcone{\R_{\ushort{r}}^{m \times n}}{X}}{Z} \le r-\ushort{r}\right\}
\end{equation}
holds because, for all $Z \in \R^{m \times n}$, $Z = \proj{\tancone{\R_{\ushort{r}}^{m \times n}}{X}}{Z} + \proj{\norcone{\R_{\ushort{r}}^{m \times n}}{X}}{Z}$.
Thus, to prove Theorem~\ref{thm:TanConeRealGenericDeterminantalVariety}, it suffices to establish the inclusions
\begin{equation}
\label{eq:TanConeRealGenericDeterminantalVarietyThreeInclusions}
\tancone{\R_{\le r}^{m \times n}}{X}
\: \subseteq\: \tancone{\R_{\ushort{r}}^{m \times n}}{X} + \left(\norcone{\R_{\ushort{r}}^{m \times n}}{X} \cap \R_{\le r-\ushort{r}}^{m \times n}\right)
\: \subseteq\: \tancone{\R_{\ushort{r}}^{m \times n}}{X} + \R_{\le r-\ushort{r}}^{m \times n}
\: \subseteq\: \tancone{\R_{\le r}^{m \times n}}{X}.
\end{equation}
The second inclusion is clear. The first and third inclusions are established in sections~\ref{subsec:ProofFirstInclusion} and~\ref{subsec:ProofThirdInclusion}, respectively.

\subsection{Proof of the first inclusion in \texorpdfstring{\eqref{eq:TanConeRealGenericDeterminantalVarietyThreeInclusions}}{(12)}}
\label{subsec:ProofFirstInclusion}

\subsubsection{Preliminaries}
Let $Z \in \tancone{\R_{\le r}^{m \times n}}{X}$. By~\eqref{eq:TanConeRealGenericDeterminantalVarietyOrthogonalNormal}, it suffices to prove that
\begin{equation*}
\rank \proj{\norcone{\R_{\ushort{r}}^{m \times n}}{X}}{Z} \le r-\ushort{r}.
\end{equation*}
By the proof of the second equality in Theorem~\ref{thm:TanConeRealGenericDeterminantalVarietyMatrix}, $\rank \proj{\norcone{\R_{\ushort{r}}^{m \times n}}{X}}{Z} = \rank U_\perp^\tp Z V_\perp$. By definition, there exist sequences $(X_i)_{i \in \N}$ in $\R_{\le r}^{m \times n}$ converging to $X$ and $(t_i)_{i \in \N}$ in $(0, \infty)$ such that $(\frac{X_i-X}{t_i})_{i \in \N}$ converges to~$Z$. The inequality $\rank U_\perp^\tp Z V_\perp \le r-\ushort{r}$ is established in section~\ref{subsubsec:MainInequality}. Other proofs are proposed in section~\ref{sec:DiscussionsComplements}.

\subsubsection{Main inequality}
\label{subsubsec:MainInequality}
For every $i \in \N$, let $\oshort{U}_{i\perp} \in \st_\R(m-r, m)$ such that $\im \oshort{U}_{i\perp} \subseteq (\im X_i)^\perp$. Then, for all $i \in \N$,
\begin{align*}
\oshort{U}_{i\perp}^\tp X_i = 0_{m-r \times n},&&
\oshort{U}_{i\perp}^\tp \frac{X_i-X}{t_i} V_\perp = 0_{m-r \times n-\ushort{r}}.
\end{align*}
By the Bolzano--Weierstrass theorem, since $\st_\R(m-r, m)$ is closed and bounded, a subsequence $(\oshort{U}_{i_k\perp})_{k \in \N}$ converges to $\oshort{U}_\perp \in \st_\R(m-r, m)$. Thus,
\begin{align}
\label{eq:LimitEquality}
\oshort{U}_\perp^\tp X = 0_{m-r \times n},&&
\oshort{U}_\perp^\tp Z V_\perp = 0_{m-r \times n-\ushort{r}}.
\end{align}
The first equality in~\eqref{eq:LimitEquality} implies $\im \oshort{U}_\perp \subseteq (\im X)^\perp$. Hence, since $(\im X)^\perp = \im U_\perp$, it holds that $\oshort{U}_\perp = U_\perp U_\perp^\tp \oshort{U}_\perp$. Therefore, $U_\perp^\tp \oshort{U}_\perp \in \st_\R(m-r, m-\ushort{r})$ and the second equality in~\eqref{eq:LimitEquality} yields $\oshort{U}_\perp^\tp U_\perp (U_\perp^\tp Z V_\perp) = 0_{m-r \times n-\ushort{r}}$. Thus, $\im U_\perp^\tp Z V_\perp \subseteq (\im U_\perp^\tp \oshort{U}_\perp)^\perp$. Hence,
\begin{equation*}
\rank U_\perp^\tp Z V_\perp
\le \dim ((\im U_\perp^\tp \oshort{U}_\perp)^\perp)
= (m-\ushort{r}) - \rank U_\perp^\tp \oshort{U}_\perp
= (m-\ushort{r}) - (m-r)
= r-\ushort{r}.
\end{equation*}

\subsection{Proof of the third inclusion in \texorpdfstring{\eqref{eq:TanConeRealGenericDeterminantalVarietyThreeInclusions}}{(12)}}
\label{subsec:ProofThirdInclusion}
This section merely rephrases the first part of~\cite[proof of Theorem~3.2]{SchneiderUschmajew2015}. Let $Z \in \tancone{\R_{\ushort{r}}^{m \times n}}{X} + \R_{\le r-\ushort{r}}^{m \times n}$. Then, $Z = Z_1 + Z_2$ with $Z_1 \in \tancone{\R_{\ushort{r}}^{m \times n}}{X}$ and $Z_2 \in \R_{\le r-\ushort{r}}^{m \times n}$. Hence, there exist sequences $(Y_i)_{i \in \N}$ in $\R_{\ushort{r}}^{m \times n}$ converging to $X$ and $(t_i)_{i \in \N}$ in $(0, \infty)$ such that $(\frac{Y_i-X}{t_i})_{i \in \N}$ converges to $Z_1$. For all $i \in \N$, $X_i \coloneq Y_i+t_iZ_2 \in \R_{\le r}^{m \times n}$ and $\frac{X_i-X}{t_i} = \frac{Y_i-X}{t_i}+Z_2 \xrightarrow{i \to \infty} Z$. Therefore, $Z \in \tancone{\R_{\le r}^{m \times n}}{X}$.

\section{The gap in \texorpdfstring{\cite[proof of Theorem~3.2]{SchneiderUschmajew2015}}{Theorem 3.2 in Schneider/Uschmajew 2015}}
\label{sec:GapProofSU15Thm3.2}
This section presents an example of a sequence $(X_i)_{i \in \N \setminus \{0\}}$ in $\R_{\le r}^{m \times n}$ converging to $X \in \R_{\ushort{r}}^{m \times n}$ such that $(i(X_i-X))_{i \in \N \setminus \{0\}}$ converges and $\rank \proj{\norcone{\R_{\ushort{r}}^{m \times n}}{X}}{X_i} > r-\ushort{r}$ for all $i \in \N \setminus \{0\}$. This shows that the proof of the first inclusion in~\eqref{eq:TanConeRealGenericDeterminantalVarietyThreeInclusions} proposed in \cite[proof of Theorem~3.2]{SchneiderUschmajew2015} is not entirely correct since, for all $i \in \N \setminus \{0\}$, $\proj{\norcone{\R_{\ushort{r}}^{m \times n}}{X}}{i(X_i-X)} = i \proj{\norcone{\R_{\ushort{r}}^{m \times n}}{X}}{X_i}$ has rank larger than $r - \ushort{r}$. Define $m \coloneq n \coloneq 4$, $r \coloneq 3$, $\ushort{r} \coloneq 2$,
\begin{align*}
X \coloneq
\begin{bmatrix}
1 & 0 & 0 & 0\\
0 & 1 & 0 & 0\\
0 & 0 & 0 & 0\\
0 & 0 & 0 & 0
\end{bmatrix}, \quad \text{and} \quad
X_i \coloneq
\begin{bmatrix}
1 & 0 & 0 & 0\\
0 & 1 & 0 & \frac{1}{i}\\
0 & 0 & \frac{1}{i} & 0\\
0 & \frac{1}{i} & 0 & \frac{1}{i^2}
\end{bmatrix}
\text{ for all } i \in \N \setminus \{0\}.
\end{align*}
Then, $(X_i)_{i \in \N \setminus \{0\}}$ converges to $X$ and
\begin{equation*}
\lim_{i \to \infty} i (X_i - X) =
\begin{bmatrix}
0 & 0 & 0 & 0\\
0 & 0 & 0 & 1\\
0 & 0 & 1 & 0\\
0 & 1 & 0 & 0
\end{bmatrix}
\in \tancone{\R_{\le r}^{m \times n}}{X}.
\end{equation*}
By~\eqref{eq:ProjNorSpaceFixedRankManifold}, for all $i \in \N \setminus \{0\}$,
\begin{equation*}
\proj{\norcone{\R_{\ushort{r}}^{m \times n}}{X}}{X_i} =
\begin{bmatrix}
0 & 0 & 0 & 0\\
0 & 0 & 0 & 0\\
0 & 0 & \frac{1}{i} & 0\\
0 & 0 & 0 & \frac{1}{i^2}
\end{bmatrix}.
\end{equation*}
Thus, for all $i \in \N \setminus \{0\}$, $\rank \proj{\norcone{\R_{\ushort{r}}^{m \times n}}{X}}{X_i} = 2 > 1$.

Though, as we have just seen, $\rank \proj{\norcone{\R_{\ushort{r}}^{m \times n}}{X}}{X_i} > r-\ushort{r}$ for all $i \in \N \setminus \{0\}$, it does hold that $\rank (X_i - L_i) \leq r-\ushort{r}$, where $L_i$ is a nearest point to $X+\proj{\tancone{\R_{\ushort{r}}^{m \times n}}{X}}{X_i-X}$ in $(X+ \proj{\tancone{\R_{\ushort{r}}^{m \times n}}{X}}{X_i-X} +\norcone{\R_{\ushort{r}}^{m \times n}}{X})\cap\R_{\ushort{r}}^{m \times n}$. This fact is proven and exploited in section~\ref{sec:DiscussionsComplements}, where an alternative proof to that from section~\ref{subsubsec:MainInequality} is given.

\section{The algebraic tangent cone to the generic determinantal variety}
\label{sec:AlgebraicTangentConeGenericDeterminantalVariety}
This section focuses on the \emph{algebraic} tangent cone to the generic determinantal variety over an algebraically closed field or $\R$.
Section~\ref{subsec:ElementsAlgebraicAnalyticGeometry} reviews basic definitions and results from algebraic and analytic geometry. Then, it is deduced in section~\ref{subsec:AlgebraicTangentConeRealGenericDeterminantalVariety} that, for the generic determinantal variety over~$\R$, the algebraic tangent cone and the tangent cone coincide. Finally, in section~\ref{subsec:ExpressionViaGrassmann}, the expression for the algebraic tangent cone to $F_{\le r}^{m \times n}$ given in~\cite[(2.2)]{ArbarelloCornalbaGriffithsHarris} for $F = \C$ is shown to be equivalent to that of the tangent cone to that variety for $F \in \{\R, \C\}$.

\subsection{Elements of algebraic and analytic geometry}
\label{subsec:ElementsAlgebraicAnalyticGeometry}
This section can readily be skipped by readers familiar with the concept of algebraic tangent cone.

\subsubsection{Affine algebraic sets}
\label{subsubsec:AffineAlgebraicSets}
Let $F$ be a field. A subset of $F^n$ is called an \emph{affine algebraic set} if it is the set of common roots of a set of polynomials with coefficients in $F$~\cite[Definition~1.2.1]{Mangolte}. Without loss of generality, the set of polynomials can be assumed to be finite and even to be a singleton if $F = \R$. This definition is also given in~\cite[Chapter~II]{AkbulutKing},~\cite[Definition~2.1.1]{BochnakCosteRoy} for $F$ a real closed field,~\cite[\S 7.1]{Chirka} for $F = \C$, and~\cite[\S 1]{GhomiHoward} for $F = \R$. An affine algebraic set is called an \emph{affine variety} in~\cite[Chapter~1, \S 2]{CoxLittleOShea} and, for $F$ algebraically closed, in~\cite[Lecture~1]{Harris}. In contrast, the term ``affine algebraic variety'' names an affine algebraic set ``up to a biregular isomorphism'' in~\cite[Definitions~1.3.1 and 1.3.10]{Mangolte} and, for $F$ a real closed field, in~\cite[Definition~3.2.9]{BochnakCosteRoy}.

The adjective ``affine'' is used for distinction with \emph{projective} algebraic sets and varieties, which are defined in all the books cited in this section.
The set $F_{\le r}^{m \times n}$, which is an affine algebraic subset of $F^{m \times n}$, can also be viewed as a projective algebraic set since determinants are homogeneous polynomials. However, this is not the point of view chosen in this work.

\subsubsection{Real and complex analytic sets}
\label{subsubsec:RealComplexAnalyticSets}
Let $F$ be $\R$ or $\C$. A subset of $F^n$ is said to be \emph{analytic} if it is locally the set of common zeros of a set of analytic functions from $F^n$ to $F$; see~\cite[Definition~D.1.1]{Mangolte} for $F = \C$ and~\cite[\S 1]{GhomiHoward} for $F = \R$. Without loss of generality, the set of analytic functions can be assumed to be finite if $F = \C$~\cite[\S 3]{Whitney1965} and to be a singleton if $F = \R$. An analytic subset of $\C^n$ is called a \emph{complex analytic variety} in~\cite{Whitney1965Chapter,Whitney1965,Kendig1972}. In contrast, the term ``complex analytic variety'' names a more general object in~\cite[Definition~D.2.1]{Mangolte}.

Analytic subsets of $\R^n$ can have a pathological global behavior, which motivates the following definition: a subset of $\R^n$ is said to be \emph{$\C$-analytic} if it is the real part of an analytic subset of $\C^n$~\cite{WhitneyBruhat}. This definition also appears in~\cite{Kendig1972}. Affine algebraic subsets of $\R^n$ are $\C$-analytic~\cite[\S 10]{WhitneyBruhat}.

\subsubsection{Tangent cones to real and complex affine algebraic sets}
\label{subsubsec:TangentConesRealComplexAffineAlgebraicSets}
This section compares several notions of tangent cone for real and complex affine algebraic sets. Let $F$ be a field and $F[X_1, \dots, X_n]$ the polynomial ring in $n$ indeterminates over $F$~\cite[Chapter~1, \S 1]{CoxLittleOShea}. For every $f \in F[X_1, \dots, X_n]$ and $x \in F^n$, there exists $d \in \N$ such that $f = \sum_{i=0}^d f_{x, i}$, where each $f_{x, i}$ is a linear combination of $\prod_{j=1}^n (X_j-x_j)^{k_j}$ with $k_j \in \N$ and $\sum_{j=1}^n k_j = i$. If $f \ne 0$, then there exists a smallest $i \in \{0, \dots, d\}$ such that $f_{x, i} \ne 0$, and $f_{x, i}$ is called the \emph{initial polynomial} of $f$ at $x$.

Let $A$ be an affine algebraic subset of $F^n$ and $x \in A$.
The \emph{algebraic tangent cone} to $A$ at~$x$ is the set of common roots of the initial polynomials at $x$ of all polynomials that are zero on~$A$; see~\cite[Chapter~9, \S 7, Definition~2]{CoxLittleOShea} and, for $F \in \{\R, \C\}$,~\cite[\S 10]{Whitney1965},~\cite[Notation~1.6]{Kendig1972},~\cite[\S 2]{OSheaWilson2004}, and~\cite[\S 8.4]{Chirka}. Thus, the algebraic tangent cone to $A$ at $x$ is an affine algebraic subset of~$F^n$.

Let $F$ be $\R$ or $\C$. A vector $v \in F^n$ is said to be \emph{tangent} to $A$ at $x$ if there exist sequences $(x_i)_{i \in \N}$ in $A$ converging to $x$ and $(\alpha_i)_{i \in \N}$ in $(0, \infty)$ such that $(\alpha_i(x-x_i))_{i \in \N}$ converges to $v$; this definition is that given in section~\ref{sec:ProjectionTangentCone} if $F = \R$ and comes from~\cite[\S 8]{Whitney1965} if $F = \C$. The \emph{tangent cone} to $A$ at $x$, denoted $\tancone{A}{x}$, is the set of all tangent vectors to $A$ at $x$.

For $F = \C$, allowing $(\alpha_i)_{i \in \N}$ to be in $\C$ does not change the defined cone~\cite[Remark~8.2]{Whitney1965}. Moreover, the algebraic tangent cone and the tangent cone coincide~\cite[Theorem~10.6]{Whitney1965}; see also~\cite[Notation~1.6]{Kendig1972},~\cite[\S 2]{OSheaWilson2004},~\cite[\S 8.4]{Chirka}, and~\cite[Chapter~9, \S 7, Theorem~6]{CoxLittleOShea}.

For $F = \R$, the cone obtained by allowing $(\alpha_i)_{i \in \N}$ to be in $\R$ is called the \emph{symmetric tangent cone} in~\cite[\S 2.1]{GhomiHoward} and the \emph{geometric tangent cone} in~\cite[Definition~1.3]{Kendig1972} and~\cite[\S 2]{OSheaWilson2004}; it is $\tancone{A}{x} \cup (-\tancone{A}{x})$. The tangent cone is called the \emph{tangent semicone} in~\cite[\S 2]{OSheaWilson2004}. The symmetric tangent cone is included in the algebraic tangent cone, and the inclusion can be strict~\cite{Kendig1972,OSheaWilson2004}.

Tangent vectors to $A$ can be realized as tangents to analytic arcs contained in $A$:
\begin{equation}
\label{eq:TangentVectorsToAlgebraicSetsAsTangentsToAnalyticArcsInTheSet}
\tancone{A}{x} = \left\{\gamma'(0) \mid \gamma : [0, \varepsilon) \to F^n \text{ is analytic with } \varepsilon \in (0, \infty),\, \gamma(0) = x,\, \gamma([0, \varepsilon)) \subseteq A\right\}.
\end{equation}
The inclusion $\supseteq$ is clear. The inclusion $\subseteq$ is stated in~\cite[\S 8.5, Proposition~2]{Chirka} for $F = \C$ and in~\cite[Proposition~2]{OSheaWilson2004} for $F = \R$.

\subsection{The algebraic tangent cone to the real generic determinantal variety}
\label{subsec:AlgebraicTangentConeRealGenericDeterminantalVariety}
The algebraic tangent cone to the generic determinantal variety over $\R$ seems to be unavailable in the literature, which focuses on algebraically closed fields. However, the derivation of the algebraic tangent cone to $F_{\le r}^{m \times n}$ proposed in~\cite[Example~20.5]{Harris} for $F$ an algebraically closed field is also valid for $F = \R$. The resulting expression for the algebraic tangent cone to $\R_{\le r}^{m \times n}$ turns out to coincide with those of the tangent cone to $\R_{\le r}^{m \times n}$ reviewed in section~\ref{sec:TangentConeRealGenericDeterminantalVariety}; this coincidence does not hold for all real algebraic varieties, as recalled in section~\ref{subsubsec:TangentConesRealComplexAffineAlgebraicSets}. This is recorded in Theorem~\ref{thm:AlgebraicTangentConeGenericDeterminantalVariety}. Nevertheless, we do not known how to deduce the tangent cone to $\R_{\le r}^{m \times n}$ from the algebraic tangent cone to $\R_{\le r}^{m \times n}$.

\begin{theorem}
\label{thm:AlgebraicTangentConeGenericDeterminantalVariety}
Let $F$ be an algebraically closed field or $\R$. Let $X \in F_{\le r}^{m \times n}$ and $\ushort{r} \coloneq \rank X$. Let $P \in F_{m-\ushort{r}}^{m \times m-\ushort{r}}$ and $Q \in F_{n-\ushort{r}}^{n \times n-\ushort{r}}$ such that $\im P = \ker X^\tp$ and $\im Q = \ker X$. Then:
\begin{enumerate}
\item the algebraic tangent cone to $F_{\le r}^{m \times n}$ at $X$ is
\begin{equation*}
\left\{Z \in F^{m \times n} \mid \rank P^\tp Z Q \le r-\ushort{r}\right\};
\end{equation*}
\item $\tancone{\R_{\le r}^{m \times n}}{X}$ equals the algebraic tangent cone to $\R_{\le r}^{m \times n}$ at $X$.
\end{enumerate}
\end{theorem}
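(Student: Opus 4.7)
The plan is to follow Harris~\cite[Example~20.5]{Harris} for item~1 and to deduce item~2 by sandwiching the algebraic tangent cone between two sets that Theorem~\ref{thm:TanConeRealGenericDeterminantalVarietyMatrix} already identifies with $\tancone{\R_{\le r}^{m \times n}}{X}$. First I would perform an equivariance reduction: the group $GL_m(F) \times GL_n(F)$ acts on $F^{m \times n}$ by $(A, B) \cdot Z = AZB$, preserves $F_{\le r}^{m \times n}$, and acts equivariantly (via $v \mapsto AvB$) on both the algebraic tangent cone and the candidate set $\{v : \rank P^\tp v Q \le r - \ushort{r}\}$, the latter because $(A^{-\tp} P)^\tp (AvB) (B^{-1} Q) = P^\tp v Q$ and the pair $(A^{-\tp} P, B^{-1} Q)$ is a valid choice of $(P, Q)$ at $AXB$. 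Choosing $A, B$ so that $AXB = X_0 := \begin{bmatrix} I_{\ushort{r}} & 0 \\ 0 & 0 \end{bmatrix}$ reduces item~1 to showing that the algebraic tangent cone at $X_0$ equals $\{Z : \rank Z_{22} \le r - \ushort{r}\}$, where $Z_{22}$ denotes the bottom-right $(m-\ushort{r}) \times (n-\ushort{r})$ block of $Z$.

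For the inclusion $\subseteq$ in the reduced setting, I would identify enough initial polynomials. Writing $Z = X_0 + W$ blockwise and selecting row set $R = \{1, \ldots, \ushort{r}\} \cup R_2$ and column set $C = \{1, \ldots, \ushort{r}\} \cup C_2$ with $|R_2| = |C_2| = r + 1 - \ushort{r}$, the corresponding $(r+1) \times (r+1)$ submatrix has the block form $\begin{bmatrix} I_{\ushort{r}} + W_{11} & \hat W_{12} \\ \hat W_{21} & \hat W_{22} \end{bmatrix}$, where $\hat W_{12}, \hat W_{21}, \hat W_{22}$ are sub-blocks of $W_{12}, W_{21}, W_{22}$. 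Since $I_{\ushort{r}} + W_{11}$ is formally invertible, the Schur-complement identity yields, as a formal power-series identity,
\begin{equation*}
\det \begin{bmatrix} I_{\ushort{r}} + W_{11} & \hat W_{12} \\ \hat W_{21} & \hat W_{22} \end{bmatrix} = \det(I_{\ushort{r}} + W_{11}) \, \det\!\bigl(\hat W_{22} - \hat W_{21}\, (I_{\ushort{r}} + W_{11})^{-1}\, \hat W_{12}\bigr),
\end{equation*}
whose initial polynomial at $X_0$ is $\det \hat W_{22}$, an $(r+1-\ushort{r}) \times (r+1-\ushort{r})$ minor of $Z_{22}$. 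All such minors of $Z_{22}$ thus appear among the initial polynomials of the $(r+1) \times (r+1)$ minors defining $F_{\le r}^{m \times n}$, so the algebraic tangent cone is contained in $\{Z : \rank Z_{22} \le r - \ushort{r}\}$.

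The reverse inclusion splits by base field. For $F = \R$, Theorem~\ref{thm:TanConeRealGenericDeterminantalVarietyMatrix} gives $\tancone{\R_{\le r}^{m \times n}}{X_0} = \{Z : \rank Z_{22} \le r - \ushort{r}\}$, and the tangent cone sits inside the algebraic tangent cone by section~\ref{subsubsec:TangentConesRealComplexAffineAlgebraicSets}; combined with the forward inclusion this forces all three sets to coincide, simultaneously yielding item~1 for $F = \R$ and item~2. For $F$ algebraically closed, I would invoke the standard dimension-and-irreducibility argument: the candidate set is the product of the affine space parametrising $Z_{11}, Z_{12}, Z_{21}$ with the irreducible variety $F_{\le r - \ushort{r}}^{(m-\ushort{r}) \times (n - \ushort{r})}$, hence is an irreducible affine algebraic set of dimension $\ushort{r}(m+n-\ushort{r}) + (r-\ushort{r})(m+n-r-\ushort{r}) = r(m+n-r) = \dim F_{\le r}^{m \times n}$, matching the dimension of the algebraic tangent cone at $X_0$; together with the forward inclusion this forces equality.

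The genuinely new step is the Schur-complement initial-polynomial computation; its main subtlety is that, although other row/column choices produce additional initial polynomials of strictly higher degree, the $(r+1-\ushort{r})$-minors of $Z_{22}$ already suffice to cut out the candidate set, with the reverse inclusion supplied either by Theorem~\ref{thm:TanConeRealGenericDeterminantalVarietyMatrix} in the real case or by the dimension count in the algebraically closed case.
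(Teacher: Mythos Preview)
Your argument is correct and tracks Harris's computation closely, but the logical organisation differs from the paper's in a way worth recording. The paper proves item~1 independently of Theorem~\ref{thm:TanConeRealGenericDeterminantalVarietyMatrix}: it simply cites~\cite[Example~20.5]{Harris} for algebraically closed $F$ and then observes that the same argument---including the irreducibility of $F_{\le r}^{m \times n}$, for which it invokes~\cite[Example~12.1]{Harris}---goes through verbatim over $\R$; item~2 is then deduced a~posteriori by comparing item~1 with Theorem~\ref{thm:TanConeRealGenericDeterminantalVarietyMatrix}. You instead make the Schur-complement step explicit (which is helpful, since the paper's citation leaves the reader to reconstruct it) and, over $\R$, reverse the dependence: you use Theorem~\ref{thm:TanConeRealGenericDeterminantalVarietyMatrix} together with the general inclusion $\tancone{\R_{\le r}^{m \times n}}{X} \subseteq$ (algebraic tangent cone) to obtain the reverse inclusion in item~1, getting items~1 and~2 simultaneously. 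Your route has the advantage of bypassing any separate verification of irreducibility of the real determinantal variety; the paper's route has the advantage that item~1 stands on its own, without relying on the analytic Theorem~\ref{thm:TanConeRealGenericDeterminantalVarietyMatrix}. In the algebraically closed case your dimension-and-irreducibility argument is essentially what Harris does; just make explicit the standard fact you are using, namely that the algebraic tangent cone at $X_0$ has Krull dimension equal to the local dimension $r(m+n-r)$ of $F_{\le r}^{m \times n}$.
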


\begin{proof}
The second statement follows from the first and Theorem~\ref{thm:TanConeRealGenericDeterminantalVarietyMatrix}. Let us establish the first statement.

Let $F$ be algebraically closed.
There exist $\hat{P} \in F_m^{m \times m}$ and $\hat{Q} \in F_n^{n \times n}$ such that $\hat{P}X\hat{Q} = \mathrm{diag}(I_{\ushort{r}}, 0_{m-\ushort{r} \times n-\ushort{r}})$. By~\cite[Example~20.5]{Harris}, the algebraic tangent cone to $F_{\le r}^{m \times n}$ at $X$ is
\begin{equation*}
\left\{Z \in F^{m \times n} \mid \rank \hat{P}_{\ushort{r}+1 \mathord{:} m, \mathord{:}} Z \hat{Q}_{\mathord{:}, \ushort{r}+1 \mathord{:} n} \le r-\ushort{r}\right\}.
\end{equation*}
Furthermore, $\im \, (\hat{P}_{\ushort{r}+1 \mathord{:} m, \mathord{:}})^\tp = \ker X^\tp$ and $\im \hat{Q}_{\mathord{:}, \ushort{r}+1 \mathord{:} n} = \ker X$. 
Thus, there exist $R \in F_{m-\ushort{r}}^{m-\ushort{r} \times m-\ushort{r}}$ and $S \in F_{n-\ushort{r}}^{n-\ushort{r} \times n-\ushort{r}}$ such that $P = (\hat{P}_{\ushort{r}+1 \mathord{:} m, \mathord{:}})^\tp R$ and $Q = \hat{Q}_{\mathord{:}, \ushort{r}+1 \mathord{:} n} S$. Hence, for all $Z \in F^{m \times n}$, $\rank \hat{P}_{\ushort{r}+1 \mathord{:} m, \mathord{:}} Z \hat{Q}_{\mathord{:}, \ushort{r}+1 \mathord{:} n} = \rank P^\tp Z Q$.

The argument given in~\cite[Example~20.5]{Harris} also holds for $F = \R$ since it does not rely on the algebraic closedness of $F$. The only part that may require a verification is the irreducibility. Again, the argument given in~\cite[Example~12.1]{Harris} also holds for $F = \R$ since it does not rely on the algebraic closedness of $F$. Thus, the generic determinantal variety over $\R$ is irreducible, as also stated in~\cite[\S III.B]{Tsakiris2023} without proof and in~\cite[\S 2.1.2]{YaoPengTsakiris} where the given reference seems to focus on algebraically closed fields.
\end{proof}

\subsection{An expression based on the Grassmann manifold}
\label{subsec:ExpressionViaGrassmann}
Let $F$ be $\R$ or $\C$. For every vector space $E$ over $F$ and every $p \in \N$, the set $\grass(p, E)$ of all $p$-dimensional linear subspaces of $E$ is a smooth manifold called a \emph{Grassmann manifold}; see, e.g.,~\cite[\S 3.4.4]{AbsilMahonySepulchre} for $F = \R$. For every $X \in F_{\le r}^{m \times n}$,
\begin{equation}
\label{eq:TanConeComplexGenericDeterminantalVariety}
\tancone{F_{\le r}^{m \times n}}{X} = \left\{Z \in F^{m \times n} \mid \exists S \in \grass(n-r, \ker X) : ZS \subseteq \im X\right\},
\end{equation}
where $ZS$ denotes the image of $S$ under $Z$. For $F = \C$, this is~\cite[(2.2)]{ArbarelloCornalbaGriffithsHarris}. In this section, this expression is shown to be equivalent to that given in Theorem~\ref{thm:AlgebraicTangentConeGenericDeterminantalVariety}.

Let $\ushort{r} \coloneq \rank X$. Let $U \in \st_F(\ushort{r}, m)$, $U_\perp \in \st_F(m-\ushort{r}, m)$, $V \in \st_F(\ushort{r}, n)$, and $V_\perp \in \st_F(n-\ushort{r}, n)$ such that $\im U = \im X$, $\im U_\perp = (\im X)^\perp$, $\im V = \im X^*$, and $\im V_\perp = (\im X^*)^\perp$. Let $Z \in F^{m \times n}$. Then, $Z = [U \; U_\perp] \begin{bmatrix} U^*ZV & U^*ZV_\perp \\ U_\perp^*ZV & U_\perp^*ZV_\perp \end{bmatrix} [V \; V_\perp]^*$. The following equivalences hold:
\begin{align*}
&\exists S \in \grass(n-r, \ker X) \text{ such that } ZS \subseteq \im X\\
&\text{iff }
\exists W \in \st_F(n-r, n) \text{ such that } \im W \subseteq \ker X \text{ and } \im ZW \subseteq \im X\\
&\text{iff }
\exists Y \in \st_F(n-r, n-\ushort{r}) \text{ such that } \im Z V_\perp Y \subseteq \im U\\
&\text{iff }
\exists Y \in \st_F(n-r, n-\ushort{r}) \text{ such that } U_\perp^* Z V_\perp Y = 0_{m-\ushort{r} \times n-r}
\\
&\text{iff }
\exists Y \in \st_F(n-r, n-\ushort{r}) \text{ such that } \im Y \subseteq \ker U_\perp^* Z V_\perp\\
&\text{iff }
n-r \le \dim \ker U_\perp^* Z V_\perp\\
&\text{iff }
n-r \le n-\ushort{r} - \rank U_\perp^* Z V_\perp\\
&\text{iff }
\rank U_\perp^* Z V_\perp \le r-\ushort{r}.
\end{align*}
For $F = \C$, this is consistent with Theorem~\ref{thm:AlgebraicTangentConeGenericDeterminantalVariety} because $\rank U_\perp^* Z V_\perp = \rank P^\tp Z Q$ since $\im U_\perp = (\im X)^\perp = \ker X^* = \overline{\ker X^\tp} = \overline{\im P} = \im \overline{P}$ and $\im V_\perp = (\im X^*)^\perp = \ker X$.

\section{Conclusion}
\label{sec:Conclusion}
Given a field $F$, the set $F_{\le r}^{m \times n}$ is an affine algebraic subset of $F^{m \times n}$ called the generic determinantal variety over $F$. This paper has considered the tangent cone and the algebraic tangent cone to $\R_{\le r}^{m \times n}$. The concepts of tangent cone to a nonempty subset of a Euclidean vector space and of algebraic tangent cone to an affine algebraic set have been reviewed respectively in sections~\ref{sec:ProjectionTangentCone} and~\ref{subsubsec:TangentConesRealComplexAffineAlgebraicSets}.

The algebraic tangent cone to $F_{\le r}^{m \times n}$ with $F$ algebraically closed has been known since the eighties in the algebraic geometry literature; see, e.g.,~\cite[(2.2)]{ArbarelloCornalbaGriffithsHarris} and~\cite[Example~20.5]{Harris}. However, the case where $F = \R$ does not seem to have been considered. A first contribution of this paper is the observation that the derivation of the algebraic tangent cone to $F_{\le r}^{m \times n}$ proposed in~\cite[Example~20.5]{Harris} for $F$ algebraically closed is also valid for $F = \R$. This yields the first statement of Theorem~\ref{thm:AlgebraicTangentConeGenericDeterminantalVariety}.

The second and main contribution of this paper is about the tangent cone to $\R_{\le r}^{m \times n}$, which is explicitly described in~\cite[Theorem~3.2]{SchneiderUschmajew2015}. Having detected a gap in the proof therein, as detailed in section~\ref{sec:GapProofSU15Thm3.2}, and unaware of a complete proof, we have presented a proof in this paper, relying on the standard definition of the tangent cone in variational analysis, reviewed in section~\ref{sec:ProjectionTangentCone}. No proof based on algebraic geometry is known to us. The second statement of Theorem~\ref{thm:AlgebraicTangentConeGenericDeterminantalVariety} is the observation that the tangent cone and the algebraic tangent cone to $\R_{\le r}^{m \times n}$ coincide, which is not true for all real affine algebraic sets.

\appendix

\section{Discussions and complements}
\label{sec:DiscussionsComplements}
Sections~\ref{subsec:GeometricProof} and~\ref{subsec:AlgebraicProof} are each a substitute for section~\ref{subsubsec:MainInequality}. Those sections can be read independently. The link between the two proofs given in sections~\ref{subsec:GeometricProof} and~\ref{subsec:AlgebraicProof}, respectively, is drawn in section~\ref{subsec:LinkAlgebraGeometry}. Section~\ref{subsec:TangentConeIntersection} reviews four results in the literature through the lens of the intersection rule for the tangent cone.

In what follows, for every $Y \in \R^{m \times n}$ and $\rho \in (0, \infty)$, $\ball(Y, \rho) \coloneq \{\tilde{Y} \in \R^{m \times n} \mid \norm{\tilde{Y}-Y} < \rho\}$ is the open ball of center $Y$ and radius $\rho$ in $\R^{m \times n}$, and the singular values of $Y$ are denoted by $\sigma_1(Y) \ge \dots \ge \sigma_{\min\{m, n\}}(Y) \ge 0$.

\subsection{A geometric proof}
\label{subsec:GeometricProof}
This proof relies on the differential-geometric concept of orthographic retraction. The proof proposed in section~\ref{subsec:AlgebraicProof} can be seen as an algebraic translation, as explained in section~\ref{subsec:LinkAlgebraGeometry}.

Only the case where $Z \ne 0_{m \times n}$ has to be considered. The sequence $(\proj{\norcone{\R_{\ushort{r}}^{m \times n}}{X}}{\frac{X_i-X}{t_i}})_{i \in \N}$ converges to $\proj{\norcone{\R_{\ushort{r}}^{m \times n}}{X}}{Z}$. However, its elements can have a rank larger than $r-\ushort{r}$, as proven in section~\ref{sec:GapProofSU15Thm3.2}. The proof therefore decomposes $\proj{\norcone{\R_{\ushort{r}}^{m \times n}}{X}}{\frac{X_i-X}{t_i}}$ into two terms in $\norcone{\R_{\ushort{r}}^{m \times n}}{X}$, one converging to $0_{m \times n}$ and one of rank at most $r-\ushort{r}$. This decomposition relies on the orthographic retraction on $\R_{\ushort{r}}^{m \times n}$, denoted by $R^\mathrm{orth}$; see~\cite[\S 4.4]{AbsilMalick} and~\cite[\S 3.2]{AbsilOseledets2015}. There exists an open set $O_X \subseteq \tancone{\R_{\ushort{r}}^{m \times n}}{X}$ that contains $0_{m \times n}$ such that the map
\begin{equation*}
R^\mathrm{orth}_X : O_X \to \R_{\ushort{r}}^{m \times n} : Y \mapsto \proj{(X+Y+\norcone{\R_{\ushort{r}}^{m \times n}}{X})\cap\R_{\ushort{r}}^{m \times n}}{X+Y}
\end{equation*}
is well defined and continuously differentiable. Moreover, for all $Y \in O_X$, $R^\mathrm{orth}_X(Y)-X-Y \in \norcone{\R_{\ushort{r}}^{m \times n}}{X}$. The fact that $R^\mathrm{orth}$ is a retraction implies that
\begin{equation*}
\lim_{O_X \ni Y \to 0_{m \times n}} \frac{\norm{R^\mathrm{orth}_X(Y)-(X+Y)}}{\norm{Y}} = 0.
\end{equation*}
Since $(\proj{\tancone{\R_{\ushort{r}}^{m \times n}}{X}}{X_i-X})_{i \in \N}$ converges to $0_{m \times n}$, it is asymptotically contained in $O_X$. Therefore, for all $i \in \N$ large enough,
\begin{equation*}
L_i \coloneq R_X^\mathrm{orth}(\proj{\tancone{\R_{\ushort{r}}^{m \times n}}{X}}{X_i-X})
\end{equation*}
is well defined, $L_i-X-\proj{\tancone{\R_{\ushort{r}}^{m \times n}}{X}}{X_i-X} \in \norcone{\R_{\ushort{r}}^{m \times n}}{X}$, and
\begin{equation}
\label{eq:GeometricNormalDecomposition}
P_{\norcone{\R_{\ushort{r}}^{m \times n}}{X}}\left(\frac{X_i-X}{t_i}\right) = \frac{L_i-X-\proj{\tancone{\R_{\ushort{r}}^{m \times n}}{X}}{X_i-X}}{t_i} + \frac{X_i-L_i}{t_i},
\end{equation}
hence $X_i-L_i \in \norcone{\R_{\ushort{r}}^{m \times n}}{X}$.
The first term converges to $0_{m \times n}$: for all $i \in \N$ large enough,
\begin{align*}
& \frac{\left\|L_i-X-\proj{\tancone{\R_{\ushort{r}}^{m \times n}}{X}}{X_i-X}\right\|}{t_i}\\
&= \underbrace{\frac{\norm{X_i-X}}{t_i}}_{\xrightarrow{i \to \infty}~\norm{Z}} \underbrace{\frac{\left\|L_i-X-\proj{\tancone{\R_{\ushort{r}}^{m \times n}}{X}}{X_i-X}\right\|}{\left\|\proj{\tancone{\R_{\ushort{r}}^{m \times n}}{X}}{X_i-X}\right\|}}_{\xrightarrow{i \to \infty}~0 \text{ since } R^\mathrm{orth} \text{ is a retraction}} \underbrace{\frac{\left\|\proj{\tancone{\R_{\ushort{r}}^{m \times n}}{X}}{X_i-X}\right\|}{\|X_i-X\|}}_{\le 1}\\
&\xrightarrow{i \to \infty} 0.
\end{align*}
Hence, the second term $(X_i-L_i)/t_i$ converges to $\proj{\norcone{\R_{\ushort{r}}^{m \times n}}{X}}{Z}$ as $i$ tends to infinity. It remains to prove that $\rank(X_i-L_i) \le r-\ushort{r}$ for all $i \in \N$ large enough; the result will follow by lower semicontinuity of the rank.

Let us show that $\im L_i \cap (\im X)^\perp = \{0_{m \times 1}\}$ for all $i \in \N$ sufficiently large. In a nutshell, this follows from $L_i \xrightarrow{i \to \infty} X$ with $\rank L_i = \ushort{r} = \rank X$ for all $i \in \N$ sufficiently large. In detail, by contradiction, suppose not. Let $N_1$ be the (infinite) set of all $i \in \N$ such that the subspace $\im L_i \cap (\im X)^\perp$ does not reduce to $\{0_{m \times 1}\}$. There exist $(u_i)_{i \in N_1}$ in $(\im X)^\perp$ and $(v_i)_{i \in N_1}$ in $\R^n$ such that $u_i = L_i v_i$, $v_i \in (\ker L_i)^\perp = \im L_i^\tp$, and $\norm{u_i} = 1$. Thus, given a singular value decomposition $L_i = U_i \Sigma_i V_i^\tp$, there exists $\tilde{v}_i \in \R^{\ushort{r}}$ such that $v_i = V_i \tilde{v}_i$. Hence, $u_i = U_i\Sigma_i\tilde{v}_i$ and $\tilde{v}_i = \Sigma_i^{-1} U_i^\tp u_i$. Therefore, $\norm{v_i} = \norm{\tilde{v}_i} \le \norm{\Sigma_i^{-1}}_2 \norm{U_i^\tp u_i} \le \norm{\Sigma_i^{-1}}_2 \norm{U_i^\tp}_2 \norm{u_i} = \frac{1}{\sigma_{\ushort{r}}(L_i)}$, where $\norm{\cdot}_2$ is the spectral norm. By continuity of the singular values \cite[Theorem~2.6.4]{HornJohnson}, $\sigma_{\ushort{r}}(L_i) \xrightarrow{i \to \infty} \sigma_{\ushort{r}}(X) \ne 0$. Thus, $(v_i)_{i \in N_1}$ is bounded. By the Bolzano--Weierstrass theorem, there exist $N_2 \subseteq N_1$ and $N_3 \subseteq N_2$ such that $(u_i)_{i \in N_2}$ converges to $u \in (\im X)^\perp$ and $(v_i)_{i \in N_3}$ converges to $v \in \R^n$. By continuity of the norm, $\norm{u} = 1$. Furthermore, since $L_i \xrightarrow{i \to \infty} X$ and $u_i = L_i v_i$ for all $i \in N_3$, it holds that $u = X v$. Hence, $u \in \im X \cap (\im X)^\perp$. This set being identically $\{0_{m \times 1}\}$, we have $u = 0_{m \times 1}$, a contradiction with $\norm{u} = 1$.

A similar argument yields that $\im L_i^\top \cap (\im X^\top)^\perp = \{0_{n \times 1}\}$ for all $i \in \N$ sufficiently large.

The final piece of the proof relies on the following result.

\begin{lemma}  
\label{lemma:rankA+B}
Let $\cU_1$ and $\cU_2$ be linear subspaces of $\R^m$ such that $\cU_1 \cap \cU_2 = \{0_{m \times 1}\}$, $\cV_1$ and $\cV_2$ be linear subspaces of $\R^n$ such that $\cV_1 \cap \cV_2 = \{0_{n \times 1}\}$, $A_1 \in \cU_1 \otimes \cV_1$, and $A_2 \in \cU_2 \otimes \cV_2$. Then, $\rank(A_1+A_2) = \rank A_1 + \rank A_2$.
\end{lemma}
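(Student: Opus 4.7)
The plan is to produce a rank-revealing factorization of $A_1+A_2$ whose outer factors are forced to have full column rank by the two transversality hypotheses, so that the rank of the sum is read off from a block-diagonal invertible middle factor. Set $r_i \coloneq \rank A_i$ for $i \in \{1, 2\}$; the case where one of the $r_i$ is zero is immediate, so assume $r_1, r_2 \ge 1$. By~\eqref{eq:TensorProductMatrixFactorization}, the hypothesis $A_i \in \cU_i \otimes \cV_i$ gives $\im A_i \subseteq \cU_i$ and $\im A_i^\tp \subseteq \cV_i$. Then choose a full-rank factorization $A_i = U_i B_i V_i^\tp$ with $U_i \in \R^{m \times r_i}$ whose columns form a basis of $\im A_i$, $V_i \in \R^{n \times r_i}$ whose columns form a basis of $\im A_i^\tp$, and $B_i \in \R^{r_i \times r_i}$ invertible.

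Concatenating these factorizations yields
\begin{equation*}
A_1 + A_2 = [U_1 \; U_2] \begin{bmatrix} B_1 & 0_{r_1 \times r_2} \\ 0_{r_2 \times r_1} & B_2 \end{bmatrix} [V_1 \; V_2]^\tp,
\end{equation*}
in which the middle factor is invertible of size $(r_1+r_2) \times (r_1+r_2)$. The key step is to verify that $[U_1 \; U_2] \in \R^{m \times (r_1+r_2)}$ and $[V_1 \; V_2] \in \R^{n \times (r_1+r_2)}$ both have full column rank: any nontrivial linear dependence among the combined columns of $[U_1 \; U_2]$ would produce a nonzero vector in $\im A_1 \cap \im A_2 \subseteq \cU_1 \cap \cU_2 = \{0_{m \times 1}\}$, contradicting the hypothesis, and the analogous argument using $\cV_1 \cap \cV_2 = \{0_{n \times 1}\}$ handles $[V_1 \; V_2]$.

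The standard rank identity that left multiplication by a matrix with linearly independent columns and right multiplication by a matrix with linearly independent rows both preserve rank then gives $\rank(A_1+A_2) = r_1 + r_2$, as required. I do not foresee any serious obstacle: the two transversality hypotheses translate verbatim into linear independence of the combined bases, and the invertibility of $B_1$ and $B_2$ makes the rank of the product immediate. The only care needed is choosing the bases so that $\im A_i \subseteq \cU_i$ and $\im A_i^\tp \subseteq \cV_i$ are used rather than $\cU_i$ and $\cV_i$ themselves, which would give matrices that are unnecessarily wide.
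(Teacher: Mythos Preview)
Your proof is correct and follows essentially the same block-factorization approach as the paper's proof. The only difference is cosmetic: you take bases of $\im A_i$ and $\im A_i^\tp$ so that the middle block is invertible, whereas the paper takes orthonormal bases of the full subspaces $\cU_i$ and $\cV_i$ (precisely the ``unnecessarily wide'' choice you mention) so that the middle block merely has rank $\rank A_1 + \rank A_2$; both variants finish by the same rank-preservation fact for full-rank outer factors.
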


\begin{proof}
For all $i \in \{1, 2\}$, let $U_i \in \st_\R(\dim \cU_i, m)$ and $V_i \in \st_\R(\dim \cV_i, n)$ such that $\im U_i = \cU_i$ and $\im V_i = \cV_i$. By~\eqref{eq:TensorProductMatrixFactorization}, for all $i \in \{1, 2\}$, $A_i = U_i S_i V_i^\top$ with $S_i \coloneq U_i^\tp A_i V_i$ and $\rank S_i = \rank A_i$. Hence
\begin{equation*}
A_1 + A_2 = \begin{bmatrix} U_1 & U_2 \end{bmatrix}
\begin{bmatrix} S_1 & 0_{\dim \cU_1 \times \dim \cV_2} \\ 0_{\dim \cU_2 \times \dim \cV_1} & S_2 \end{bmatrix}
\begin{bmatrix} V_1^\top \\ V_2^\top \end{bmatrix},
\end{equation*}
where the first and last matrix factors have full rank. It follows that $\rank(A_1+A_1) = \rank S_1 + \rank S_2 = \rank A_1 + \rank A_2$.
\end{proof}

Clearly, $L_i \in (\im L_i) \otimes (\im L_i^\top)$. We have also seen that $X_i - L_i \in \norcone{\R_{\ushort{r}}^{m \times n}}{X}
= (\im X)^\perp \otimes (\im X^\tp)^\perp$. Finally, recall that $\im L_i \cap (\im X)^\perp = \{0_{m \times 1}\}$ and $\im L_i^\top \cap (\im X^\top)^\perp = \{0_{n \times 1}\}$ for all $i \in \N$ large enough. It thus follows from Lemma~\ref{lemma:rankA+B} that $\rank L_i + \rank(X_i - L_i) = \rank X_i$ for all $i \in \N$ large enough. Hence $\rank(X_i - L_i) = \rank X_i - \rank L_i \leq r - \ushort{r}$ for all $i \in \N$ large enough, which is what remained to be proven.

\subsection{An algebraic translation}
\label{subsec:AlgebraicProof}
Define $S \coloneq U^\tp X V \in \R_{\ushort{r}}^{\ushort{r} \times \ushort{r}}$, $\tilde{Z}_{1, 1} \coloneq U^\tp Z V$, $\tilde{Z}_{1, 2} \coloneq U^\tp Z V_\perp$, $\tilde{Z}_{2, 1} \coloneq U_\perp^\tp Z V$, and $\tilde{Z}_{2, 2} \coloneq U_\perp^\tp Z V_\perp$. Then,
\begin{align*}
X = [U \; U_\perp] \begin{bmatrix} S & 0_{\ushort{r} \times n-\ushort{r}} \\ 0_{m-\ushort{r} \times \ushort{r}} & 0_{m-\ushort{r} \times n-\ushort{r}} \end{bmatrix} [V \; V_\perp]^\tp,&&
Z = [U \; U_\perp] \begin{bmatrix} \tilde{Z}_{1, 1} & \tilde{Z}_{1, 2}\\ \tilde{Z}_{2, 1} & \tilde{Z}_{2, 2} \end{bmatrix} [V \; V_\perp]^\tp.
\end{align*}
It suffices to prove that $\rank \tilde{Z}_{2, 2} \le r-\ushort{r}$. For all $i \in \N$, define $A_i \coloneq U^\tp X_i V$, $C_i \coloneq U^\tp X_i V_\perp$, $D_i \coloneq U_\perp^\tp X_i V$, and $E_i \coloneq U_\perp^\tp X_i V_\perp$ and observe that
\begin{equation*}
X_i = [U \; U_\perp] \begin{bmatrix} A_i & C_i\\ D_i & E_i\end{bmatrix} [V \; V_\perp]^\tp.
\end{equation*}
Moreover, $(A_i)_{i \in \N}$, $(C_i)_{i \in \N}$, $(D_i)_{i \in \N}$, $(E_i)_{i \in \N}$, $(\frac{A_i-S}{t_i})_{i \in \N}$, $(\frac{C_i}{t_i})_{i \in \N}$, $(\frac{D_i}{t_i})_{i \in \N}$, and $(\frac{E_i}{t_i})_{i \in \N}$ respectively converge to $S$, $0_{\ushort{r} \times n-\ushort{r}}$, $0_{m-\ushort{r} \times \ushort{r}}$, $0_{m-\ushort{r} \times n-\ushort{r}}$, $\tilde{Z}_{1, 1}$, $\tilde{Z}_{1, 2}$, $\tilde{Z}_{2, 1}$, and $\tilde{Z}_{2, 2}$. Thus, there exists $i_* \in \N$ such that, for all integers $i \ge i_*$, $A_i \in \ball(S, \sigma_{\ushort{r}}(S))$. By the Eckart--Young theorem~\cite{EckartYoung1936}, $\ball(S, \sigma_{\ushort{r}}(S)) \subseteq \R_{\ushort{r}}^{\ushort{r} \times \ushort{r}}$. Therefore, for all integers $i \ge i_*$, $A_i$ is invertible and
\begin{align}
[U \; U_\perp]^\tp \frac{X_i-X}{t_i} [V \; V_\perp]
\label{eq:AlgebraicDecomposition}
=~&
\begin{bmatrix}
\frac{A_i-S}{t_i} & \frac{C_i}{t_i}\\[1mm]
\frac{D_i}{t_i} & 0_{m-\ushort{r} \times n-\ushort{r}}
\end{bmatrix}
+
\begin{bmatrix}
0_{\ushort{r} \times \ushort{r}} & 0_{\ushort{r} \times n-\ushort{r}}\\[1mm]
0_{m-\ushort{r} \times \ushort{r}} & \frac{D_i A_i^{-1} C_i}{t_i}
\end{bmatrix}\\
\nonumber
&+
\begin{bmatrix}
0_{\ushort{r} \times \ushort{r}} & 0_{\ushort{r} \times n-\ushort{r}}\\[1mm]
0_{m-\ushort{r} \times \ushort{r}} & \frac{E_i - D_i A_i^{-1} C_i}{t_i}
\end{bmatrix}.
\end{align}
By continuity of the matrix inversion,
\begin{equation*}
\lim_{i \to \infty} D_i A_i^{-1} C_i/t_i
= \lim_{i \to \infty} D_i/t_i \lim_{i \to \infty} A_i^{-1} \lim_{i \to \infty} C_i
= \tilde{Z}_{2, 1} S^{-1} 0_{\ushort{r} \times n-\ushort{r}}
= 0_{m-\ushort{r} \times n-\ushort{r}},
\end{equation*}
hence
\begin{equation*}
\lim_{i \to \infty} (E_i - D_i A_i^{-1} C_i)/t_i = \tilde{Z}_{2, 2}.
\end{equation*}
For all integers $i \ge i_*$, since $\rank X_i \le r$, $\rank A_i = \ushort{r}$, and
\begin{equation*}
[U \; U_\perp]^\tp X_i [V \; V_\perp]
=\!
\begin{bmatrix}
I_{\ushort{r}} & 0_{\ushort{r} \times m-\ushort{r}}\\
D_i A_i^{-1} & I_{m-\ushort{r}}
\end{bmatrix}
\!
\begin{bmatrix}
A_i & 0_{\ushort{r} \times n-\ushort{r}}\\
0_{m-\ushort{r} \times \ushort{r}} & E_i - D_i A_i^{-1} C_i
\end{bmatrix}
\!
\begin{bmatrix}
I_{\ushort{r}} & A_i^{-1} C_i\\
0_{n-\ushort{r} \times \ushort{r}} & I_{n-\ushort{r}}
\end{bmatrix},
\end{equation*}
it holds that
\begin{equation*}
\rank(E_i - D_i A_i^{-1} C_i) \le r-\ushort{r}.
\end{equation*}
By lower semicontinuity of the rank, it follows that $\rank \tilde{Z}_{2, 2} \le r-\ushort{r}$.

\subsection{Link between the geometric proof and its algebraic translation}
\label{subsec:LinkAlgebraGeometry}
For all $i \in \N$, by~\eqref{eq:ProjTanSpaceFixedRankManifold},
\begin{equation*}
\proj{\tancone{\R_{\ushort{r}}^{m \times n}}{X}}{X_i-X} = [U \; U_\perp] \begin{bmatrix} A_i-S & C_i\\ D_i & 0 _{m-\ushort{r} \times n-\ushort{r}} \end{bmatrix} [V \; V_\perp]^\tp,
\end{equation*}
hence the first term in~\eqref{eq:AlgebraicDecomposition} equals $[U \; U_\perp]^\tp \proj{\tancone{\R_{\ushort{r}}^{m \times n}}{X}}{\frac{X_i-X}{t_i}} [V \; V_\perp]$.
Moreover, for all $i \in \N$ large enough, by~\cite[Proposition~24]{AbsilMalick},
\begin{equation*}
L_i
\coloneq R_X^\mathrm{orth}(\proj{\tancone{\R_{\ushort{r}}^{m \times n}}{X}}{X_i-X})
= [U \; U_\perp] \begin{bmatrix} A_i\\ D_i \end{bmatrix} \begin{bmatrix} I_{\ushort{r}} & A_i^{-1} C_i \end{bmatrix} [V \; V_\perp]^\tp,
\end{equation*}
hence
\begin{equation*}
X_i-L_i	= \begin{bmatrix} 0_{\ushort{r} \times \ushort{r}} & 0_{\ushort{r} \times n-\ushort{r}}\\ 0_{m-\ushort{r} \times \ushort{r}} & E_i - D_i A_i^{-1} C_i \end{bmatrix}.
\end{equation*}
Therefore, the second and third terms in~\eqref{eq:AlgebraicDecomposition} respectively equal the first and second terms in~\eqref{eq:GeometricNormalDecomposition}.
An artist view of the decomposition is proposed in Figure~\ref{fig:ThreeTermsDecomposition}. 

\begin{figure}[ht]
\centering
\begin{tikzpicture}[scale=3]
\pgfmathsetmacro\a{1/sqrt(2)}
\def\b{0.7}
\draw [thick] (0, 0) arc (270:210:1);
\draw [thick] (0, 0) arc (270:330:1);
\draw (0, 0) node [below] {$X$} node {{\tiny$\bullet$}};
\draw (\a, 0) node [below right] {$X+\proj{\tancone{\R_{\ushort{r}}^{m \times n}}{X}}{X_i-X}$} node {{\tiny$\bullet$}};
\draw (\a, 0.23) node [right] {$L_i \coloneq R_X^\mathrm{orth}(\proj{\tancone{\R_{\ushort{r}}^{m \times n}}{X}}{X_i-X})$};
\draw (\a, {1-\a}) node {{\tiny$\bullet$}};
\draw (\a, \b) node [right] {$X_i \in \R_{\le r}^{m \times n}$} node {{\tiny$\bullet$}};
\draw (-1, 0.5) node [above right] {$\R_{\ushort{r}}^{m \times n}$};
\draw [thick, -stealth] (0, 0) -- (\a, 0);
\draw [thick, -stealth] (\a, 0) -- (\a, {1-\a});
\draw [thick, -stealth] (\a, {1-\a}) -- (\a, \b);
\draw [thick, -stealth] (0, 0) -- (\a, \b);
\end{tikzpicture}
\caption{Artist view of the decomposition used in the proofs of sections~\ref{subsec:GeometricProof} and~\ref{subsec:AlgebraicProof}. The picture correctly represents the following aspects: $\proj{\tancone{\R_{\ushort{r}}^{m \times n}}{X}}{X_i-X}$ is in $\tancone{\R_{\ushort{r}}^{m \times n}}{X}$, $X_i-L_i$ and $L_i-X-\proj{\tancone{\R_{\ushort{r}}^{m \times n}}{X}}{X_i-X}$ are in $\norcone{\R_{\ushort{r}}^{m \times n}}{X}$, $L_i$ has rank $\ushort{r}$, and $\norm{L_i-X-\proj{\tancone{\R_{\ushort{r}}^{m \times n}}{X}}{X_i-X}}/\norm{\proj{\tancone{\R_{\ushort{r}}^{m \times n}}{X}}{X_i-X}} \xrightarrow{i \to \infty} 0$ since $R^\mathrm{orth}$ is a retraction. However, the picture is unfaithful in several respects: $X_i$, $L_i$, and $X+\proj{\tancone{\R_{\ushort{r}}^{m \times n}}{X}}{X_i-X}$ are, in general, not aligned, and the smooth manifold $\R_{\ushort{r}}^{m \times n}$ does not look like that regardless of $m$, $n$, and $\ushort{r}$.}
\label{fig:ThreeTermsDecomposition}
\end{figure}

\subsection{The tangent cone to the intersection of the real generic determinantal variety and another set: four examples}
\label{subsec:TangentConeIntersection}
The tangent cone to the intersection of two sets is always included in the intersection of the tangent cones~\cite[Theorem~6.42]{RockafellarWets}. In view of Theorem~\ref{thm:TanConeRealGenericDeterminantalVarietyMatrix},~\cite[Theorem~6.1]{CasonAbsilVanDooren2013} shows that the inclusion is an equality for $\R_{\le r}^{m \times n}$ and the unit sphere $\{X \in \R^{m \times n} \mid \norm{X} = 1\}$. Similarly, the tangent cone to the intersection of $\R_{\le r}^{m \times n}$ and an affine subspace of $\R^{m \times n}$ equals the intersection of the tangent cones~\cite[Lemma~3.6]{LiLuo2023}, the tangent cone to the intersection of $\R_{\le r}^{m \times n}$ and a level set of a smooth map $h : \R^{m \times n} \to \R^q$ equals the intersection of the tangent cones if the derivative of $h$ at every point of the level set is surjective and $h(XQ) = h(X)$ for all $X \in \R^{m \times n}$ and $Q \in \st_\R(n, n)$ \cite[Corollary~2]{YangGaoYuan}, and the tangent cone to the intersection of $\R_{\le r}^{n \times n}$ and the convex cone $\{X \in \R^{n \times n} \mid X^\tp = X,\, X \succeq 0\}$ equals the intersection of the tangent cones~\cite[Proposition~3.32]{LevinKileelBoumal2025}.

\bibliographystyle{plainurl}
\bibliography{golikier_bib_abbrv}
\end{document}